\newcommand{\R}{{\mathbb R}}
\newcommand{\N}{{\mathbb N}}
\newcommand{\al}{\alpha}
\newcommand{\Prol}{\mathrm{Prol}}
\renewcommand{\rho}{\varrho}
\renewcommand{\epsilon}{\varepsilon}
\renewcommand{\theta}{\vartheta}
\newcommand{\la}{\lambda}
\renewcommand{\a}{\alpha}
\newcommand{\ga}{\gamma}
\newcommand{\Z}{{\mathbb{Z}}}
\newcommand{\Rn}{{\mathbb{R}}^{n}}
\newcommand{\I}{{\mathcal{I}}}
\newcommand{\A}{\mathcal{A}}
\newcommand{\D}{{\mathcal{D}}}
\newcommand{\ex}{{\mathrm{e}}}
\newcommand{\Hom}{{\mathrm{Hom}}}
\newcommand{\Endo}{{\mathrm{End}}}
\newcommand{\barint}
{\rule[.036in]{.12in}{.009in}\kern-.16in \displaystyle\int}
\theoremstyle{plain}
\newtheorem{teo}{Theorem}[section]
\newtheorem{prop}[teo]{Proposition}
\newtheorem{defi}[teo]{Definition}
\newtheorem{lemma}[teo]{Lemma}
\theoremstyle{remark}
\newtheorem{remark}[teo]{Remark}
\title[Extremal polynomials]{Extremal polynomials in stratified groups}
\date{\today}
\author[Le Donne]{Enrico Le Donne}
\address[Le Donne]{Department of Mathematics and Statistics, P.O. Box 35,
FIN-40014,
University of Jyv\"askyl\"a, Finland}%
\email{ledonne@msri.org}
\author[Leonardi]{Gian Paolo
Leonardi}
\address[Leonardi]{Universit\`a di Modena e Reggio Emilia,
  Dipartimento di Scienze
Fisiche, Informatiche e Matematiche, via Campi 213/b, 41100
  Modena, Italy}
\email{gianpaolo.leonardi@unimore.it}
\author[Monti]{Roberto Monti}
\address[Monti and Vittone]{Universit\`a di Padova, Dipartimento di Matematica,
via Trieste 63, 35121 Padova, Italy}
\email{monti@math.unipd.it}
\author[Vittone]{Davide Vittone}
\email{vittone@math.unipd.it}
\keywords{Abnormal extremals, Extremal polynomials, Carnot groups, 
SubRiemannian geometry}%
\subjclass[]{53C17, 49K30, 17B70.}
\thanks{This work was partially supported by 
INDAM,  University of Padova research project ``Some analytic and
differential geometric aspects in Nonlinear Control Theory, with
applications to Mechanics'', and  Fondazione CaRiPaRo Project
``Nonlinear Partial Differential Equations: models, analysis, and
control-theoretic problems''.}
\begin{document}

\begin{abstract}


We introduce a family of  {extremal polynomials} associated with the
prolongation of a stratified nilpotent Lie algebra. 
These polynomials are related to a new algebraic characterization of abnormal
subriemannian geodesics in   stratified nilpotent Lie groups. 
They satisfy  a set of remarkable 
structure relations that are used to integrate the  {adjoint equations}. 

\end{abstract}

\maketitle
 \section{Introduction}

In any stratified nilpotent Lie group, we compute explicitly the solutions
to the adjoint equations
for extremal curves related to the natural left invariant horizontal
distribution. In the   Hamiltonian formalism,
a normal extremal $(\gamma,\lambda)$, where $\ga$ is a curve in the group and
$\lambda$ is the dual curve in the cotangent bundle, solves the system of
equations
\begin{equation}\label{SH}
 \dot \gamma = \frac{\partial H}{\partial \lambda}(\gamma,\lambda),\quad
\dot\lambda = - \frac{\partial H}{\partial \gamma}(\gamma,\lambda),
\end{equation}
where $H$ is the Hamiltonian function. 
After fixing 
a basis of the Lie algebra of the group inducing 
exponential
coordinates of the second type, we integrate  
the second equation,  namely the equation $\dot\lambda = - {\partial H}/
{\partial \gamma}$ (see the precise formula in Theorem \ref{teo:eqdualvar1}),
and we express  $\lambda$ as a function of $\gamma$. 
In other words, we compute $n$ prime integrals of the
Hamiltonian
system \eqref{SH}, where $n$ is the dimension of the group.
The solutions are expressed in terms of a family of polynomials, called
{\em extremal polynomials},
that satisfy
a set of remarkable 
structure formulas (see \eqref{TH1} below). These formulas  involve the
structure constants of the Lie algebra of the group, in fact, of its Tanaka
prolongation. 
Extremal polynomials can also be   used to give an algebraic characterization of
{\em abnormal} extremals. This is the main motivation of our research.

\medskip

Let $G$ be a stratified nilpotent Lie group of dimension $n$ and rank $r$. The
Lie
algebra $\mathfrak g=\mathrm{Lie}(G)$ has the stratification $\mathfrak g =
\mathfrak
g_1\oplus\cdots\oplus \mathfrak g_s$,  where $s$ is the step of the
algebra, $\mathfrak g_i = [\mathfrak g_{i-1},\mathfrak g_1]$ for $i=2,\ldots,s$,
and
$\mathfrak g_i = \{0\}$ for $i>s$. The rank of $\mathfrak g$ is
$r=\mathrm{dim}(\mathfrak g_1)$.
Let $X_1,\ldots ,X_n$ be a basis of $\mathfrak g$ adapted to the
stratification. We identify the group $G$ with $\Rn$ via exponential coordinates
of the second
type induced by the basis $X_1,\ldots ,X_n$, 
and we identify $\mathfrak g$ with the corresponding
 Lie algebra of left invariant vector fields in $\Rn$.

Let $\Prol(\mathfrak g)=\bigoplus_{k\leq s} \mathfrak g_k$ be the Tanaka
prolongation of $\mathfrak g$, see \cite{T}. Even though this is not 
strictly needed in our argument, an explicit construction  is briefly recalled
in
Section
\ref{section:polyformulas}.  
We extend $X_1,\ldots ,X_n$   to a basis
$\{X_j\}_{j\leq n}$ of the prolongation 
and to each $j\leq n$ we assign the degree $d(j)= k$ if and only if $X_j
\in\mathfrak g_k$. 
Then we assume that the basis  is adapted to the graduation:
this means
that $i<j$ implies    $d(i)\leq d(j)$.
When the prolongation is finite dimensional, the index
$j$ ranges in a finite set,  $m \leq j\leq n$  for some
$m\in\Z$. With abuse of notation, we   denote the basis
$\{X_j\}_{m\leq j\leq n}$ by $\{X_j\}_{j\leq n}$, as
in the infinite dimensional case.
Let $c_{ij}^k \in\R$ be the structure constants 
of $\Prol(\mathfrak g)$ associated with the basis $\{X_j\}_{j\leq n}$.
Namely, for all $i,j\in\Z$ with $i,j\leq n$ we have 
\begin{equation}\label{StrCon2}
 [X_i,X_j] = \sum_{k\leq n}  c_{ij}^k X_k.
\end{equation}
The sum is always   finite, because each stratum $\mathfrak g_{k}$ 
of $\Prol(\mathfrak g)$ is finite dimensional and $c_{ij}^k=0$
if $d(k) \neq d(i)+d(j)$.

In this paper, we introduce a family of  \emph{extremal polynomials}
$P_j^v(x)$, $j\leq n$ and $x\in\R^n$,   associated with the
basis $\{X_j\}_{j\leq n}$ of $\Prol(\mathfrak g)$. They depend linearly on a
parameter $v\in\Rn$, see Definition \ref{Pollo}. 
Extremal polynomials satisfy the following  structure formulas.

\begin{teo}\label{teo1}
For
any $v\in \Rn$, $i=1,\dots,n$, and $j\in\Z$ with $j\leq n$ there holds 
\begin{equation}\label{TH1} 
X_i P_j^v (x) = \sum_{k\leq n} c_{ij}^k P_k^v(x),\quad x\in\Rn.
\end{equation}
\end{teo}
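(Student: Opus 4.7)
The plan is to establish \eqref{TH1} by identifying the extremal polynomial $P_j^v$ with the pairing of $v$ against $\Ad_x X_j$, where the adjoint representation is extended from $\mathfrak{g}$ to the whole Tanaka prolongation $\Prol(\mathfrak{g})$ by graded Lie algebra automorphisms. The extension $\Ad_x = \exp(\ad_{\log x})$ is well-defined and depends polynomially on the coordinates of $x$, thanks to the grading and the finite-dimensionality of each stratum of $\Prol(\mathfrak{g})$. Viewing $v \in \Rn$ as a linear functional on $\mathfrak{g}$ (extended by zero on the prolongation part), I would identify
\begin{equation*}
P_j^v(x) \;=\; \langle v,\, \Ad_x X_j \rangle,
\end{equation*}
thereby recovering the polynomial of Definition \ref{Pollo}.

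With this description, the proof of \eqref{TH1} reduces to the infinitesimal equivariance
\begin{equation*}
X_i\bigl(\Ad_{\,\cdot\,} Y\bigr)(x) \;=\; \Ad_x\,[X_i, Y], \qquad Y \in \Prol(\mathfrak{g}),
\end{equation*}
valid for every left-invariant vector field $X_i$. I would obtain this identity by differentiating $t \mapsto \Ad_{x\exp(tX_i)} Y$ at $t=0$ and using $\Ad_{gh} = \Ad_g \Ad_h$. Applying it with $Y = X_j$, expanding $[X_i, X_j]$ via \eqref{StrCon2}, and pairing with $v$ then yields \eqref{TH1} directly.

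The main technical obstacle is the identification step: verifying that the polynomial given by Definition \ref{Pollo} coincides with $\langle v, \Ad_x X_j \rangle$ when $j$ belongs to the prolongation, since in that case $X_j$ is not a left-invariant vector field on $G$. I would handle this by induction on the degree $d(j)$, exploiting the universal property of the Tanaka prolongation, whereby an element of $\mathfrak{g}_k$ with $k \leq 0$ is uniquely determined by its bracket action on $\mathfrak{g}_1$; the structure formula then propagates the identification from positive to non-positive degrees via the Jacobi identity. Finiteness of the sum in \eqref{TH1} is automatic from the grading, since $c_{ij}^k$ vanishes unless $d(k) = d(i) + d(j)$, so only one finite-dimensional stratum of $\Prol(\mathfrak{g})$ contributes.
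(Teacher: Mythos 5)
Your proposal is correct, but it follows a genuinely different route from the paper. The paper never interprets $P_j^v$ representation-theoretically: it proves \eqref{TH1} by a triple nested induction over the coordinate subgroups $G_\ell=\{x_1=\dots=x_{\ell-1}=0\}$, starting from two base cases (the identity on $G_i$ where $X_i=\partial/\partial x_i$, and the case $d(i)=s$), and closing the induction with an explicit combinatorial commutator identity, everything being carried out at the level of the generalized structure constants $c_{i\alpha}^k$. Your argument instead packages the whole computation into the statement $P_j^v(x)=\langle v,\Ad_x X_j\rangle$, where $\Ad$ is the adjoint representation of $G$ extended to $\Prol(\mathfrak g)$, and reduces \eqref{TH1} to the equivariance obtained by differentiating $t\mapsto\Ad_{x\exp(tX_i)}X_j$ and using $\Ad_{gh}=\Ad_g\Ad_h$; since each truncation $\bigoplus_{k\geq d}\mathfrak g_k$ is a finite-dimensional $\ad_{\mathfrak g}$-invariant subspace and $G$ is simply connected, the extended $\Ad$ is indeed a well-defined representation, so this step is sound. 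One remark: the identification you flag as the main obstacle is actually immediate and needs no induction or universal property of the prolongation. In the second-kind coordinates \eqref{ExpCord} one has $\Ad_x=e^{x_n\ad_{X_n}}\cdots e^{x_1\ad_{X_1}}$, and since $[X_j,X_\alpha]=(-1)^{|\alpha|}\ad_{X_n}^{\alpha_n}\cdots\ad_{X_1}^{\alpha_1}X_j$, expanding the product of exponentials and pairing with $v$ reproduces Definition \ref{Pollo} term by term, for every $j\leq n$ (only the grading \eqref{GRADO} is used, exactly as in the paper). In comparison, your route is shorter and explains conceptually why the prolongation enters only through its grading (extremal polynomials are matrix coefficients of the extended adjoint representation), while the paper's inductive proof is entirely elementary, avoids integrating a Lie algebra representation to the group, and produces along the way the restricted identities on the subgroups $G_i$ that scaffold its argument.
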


Modulo the value at $x=0$, extremal polynomials are uniquely determined by the
family of identities
\eqref{TH1}.
These structure formulas
are the core of the paper and of our main technical result,   Theorem
\ref{propderivpolinomi}. They first appeared in 
\cite{LLMV}, but only in the case of \emph{free} groups. In \cite{LLMV},
the formulas were obtained a posteriori, as a 
consequence of certain algebraic identities, 
however only for $j=1,\ldots,n$ with no reference to the prolongation and only 
for the coordinates related to a Hall basis.

Our interest in extremal polynomials origins 
in the regularity problem of sub-Riemannian length minimizing curves,
one of the main open problems in the field (see \cite{montgomery},
\cite{AgrSomeProb}, \cite{monti2}).
Let $M$ be a differentiable manifold and $\mathcal D\subset TM$ a bracket
generating distribution. A Lipschitz curve $\gamma:[0,1]\to M$ is 
horizontal  if $\dot\gamma(t) \in \mathcal D(\gamma(t))$ for a.e.~$t\in[0,1]$.
Fixing a quadratic form on $\mathcal D$, one can define the length of horizontal
curves. Length 
minimizing curves may be either normal extremals or abnormal extremals:
while normal extremals are always smooth, abnormal ones are apriori
only Lipschitz continuous.  Abnormal extremals
depend only on the structure $(M,\mathcal D)$:
they are precisely the singular points of the end-point mapping.

In Section \ref{section:alg-abn-extr}, we use extremal polynomials and
Theorem \ref{teo1} to 
give an
algebraic characterization
of abnormal  extremals in stratified nilpotent Lie groups (Carnot groups).
This is of special interest because, by 
Mitchell's theorem, 
Carnot groups are the infinitesimal model of equiregular sub-Riemannian
structures.

Let $\vartheta_1,\ldots,\vartheta_n$ be a basis of $1$-forms of $\mathfrak
g^*$, the dual of $\mathfrak g$.
A curve $\lambda \in\mathrm{Lip}([0,1];\mathfrak g^*)$ is given by coordinates
$\lambda_1,\ldots,\lambda_n\in\mathrm{Lip}([0,1])$ such that
$\lambda=\lambda_1 \vartheta_1+\ldots +\lambda_n\vartheta_n$.
If $\lambda$ is the dual curve of a normal or abnormal   extremal
$\gamma:[0,1]\to G=\Rn$,
then 
\begin{equation}\label{8121}
\dot\lambda_i = - \sum_{k=1}^n \sum_{j=1}^r  c_{ij}^k  \dot \gamma_j 
\lambda_k\quad\text{a.e.~on }[0,1],\quad i=1,\ldots,n.
\end{equation}
See Section
\ref{section:alg-abn-extr}  and Theorem \ref{teo:eqdualvar1}
for more details. In Theorem
\ref{teointduale}, we use the structure formulas \eqref{TH1} to integrate the
system of adjoint equations \eqref{8121}. The solutions are 
\[
 \lambda_i(t) = P^v_{i}(\gamma(t)),\quad i=1,\ldots,n,
\]
where $v\in\Rn$ is such that $v_i = \lambda_i(0)$. Thus, we can  prove the
following theorem
(see   Theorem \ref{cor:polin} for the complete statement).

\begin{teo}\label{ab}
Let $G=\Rn$ be a stratified nilpotent Lie group and let $\ga:[0,1]\to G$ be a 
horizontal curve with $\ga(0)=0$. Then, the following statements are
equivalent:
\begin{itemize}
 \item[(A)] The curve $\ga$ is an abnormal extremal.
 \item[(B)] There exist  $v\in\Rn$, $v\neq0$, such that $P_i^v(\gamma(t))=0$
for all $t\in[0,1]$ and for all $i\leq r$.
\end{itemize} 
\end{teo}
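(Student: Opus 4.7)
The plan is to reduce the statement to the explicit integration formula $\lambda_i(t)=P_i^v(\gamma(t))$ from Theorem \ref{teointduale}, combined with the standard Pontryagin-type characterization of abnormality. First I would make precise, in the coordinates fixed at the beginning of the introduction, that a horizontal curve $\gamma:[0,1]\to G$ is an abnormal extremal if and only if there exists a nontrivial Lipschitz covector curve $\lambda:[0,1]\to\mathfrak g^*$ satisfying the adjoint system \eqref{8121} together with the annihilation condition $\lambda_i(t)=0$ for every $i=1,\dots,r$ and every $t\in[0,1]$. Under the identification $T^*_{\gamma(t)}G\cong\mathfrak g^*$ via left-translation and with $\vartheta_1,\dots,\vartheta_n$ the dual basis of $X_1,\dots,X_n$, this is just the coordinate transcription of ``$\lambda$ annihilates the horizontal distribution along $\gamma$''.

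Next I would record two elementary facts, both consequences of Theorem \ref{teo1} and of the linearity of the map $v\mapsto P_j^v$: (i) $P_j^v(0)=v_j$ for $j=1,\dots,n$, and (ii) for every $v\in\Rn$ the Lipschitz curve $t\mapsto (P_1^v(\gamma(t)),\dots,P_n^v(\gamma(t)))$ is the unique solution of the linear ODE \eqref{8121} with initial datum $v$. Statement (ii) is exactly Theorem \ref{teointduale}; uniqueness is automatic since \eqref{8121} is linear in $\lambda$ with $L^\infty$ coefficients $\dot\gamma_j$.

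Both implications then become immediate. For (A)$\Rightarrow$(B), pick a nontrivial $\lambda$ as above and set $v:=\lambda(0)$: linearity of \eqref{8121} forces $v\neq 0$ (otherwise $\lambda\equiv 0$), and uniqueness gives $\lambda_i(t)=P_i^v(\gamma(t))$, so the annihilation condition yields $P_i^v(\gamma(\cdot))\equiv 0$ for every $i\leq r$. For (B)$\Rightarrow$(A), given $v\neq 0$ with $P_i^v(\gamma(\cdot))\equiv 0$ for $i\leq r$, one defines $\lambda_i(t):=P_i^v(\gamma(t))$: statement (ii) gives \eqref{8121}, the hypothesis gives the annihilation, and $\lambda(0)=v\neq 0$ gives nontriviality. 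Hence $\gamma$ is abnormal. The main obstacle therefore lies entirely upstream, in the proof of Theorem \ref{teointduale}: one must differentiate $P_i^v(\gamma(t))$ along a horizontal curve, expand $\dot\gamma(t)=\sum_{j=1}^r \dot\gamma_j(t)X_j(\gamma(t))$, invoke the structure formulas \eqref{TH1} to rewrite $X_j P_i^v$ as $\sum_{k\leq n} c_{ji}^k P_k^v$, and recognize \eqref{8121} after a sign/index bookkeeping. Once that is in place, the equivalence of (A) and (B) is almost a tautology.
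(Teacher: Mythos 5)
Your reduction to Theorem \ref{teointduale} is the right mechanism, and your implication (B)$\Rightarrow$(A) is essentially the paper's own argument (with the small caveat that for condition (i) of Theorem \ref{teo:eqdualvar1} with $\lambda_0=0$ you need $\lambda$ \emph{nonvanishing pointwise} on $[0,1]$, not merely nontrivial; this follows from uniqueness for the linear system \eqref{81}, the same argument you already invoke). But in the direction (A)$\Rightarrow$(B) there is a genuine gap: statement (B) asserts $P_i^v(\gamma(t))=0$ for \emph{all} $i\le r$, where $i$ ranges over the index set of the basis of the Tanaka prolongation, so it includes the indices $i\le 0$ attached to $\mathfrak g_0,\mathfrak g_{-1},\dots$ (the paper stresses exactly this point: (B) involves indices $i\le 0$, and the abnormal variety $Z_v$ is cut out by all $P_j^v$ with $j\le r$). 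Theorem \ref{teointduale} only identifies $\lambda_i=P_i^v(\gamma)$ for $i=1,\dots,n$, so the annihilation condition $\lambda_1=\dots=\lambda_r=0$ gives you the vanishing of $P_i^v(\gamma)$ only for $i=1,\dots,r$; your proposal never addresses the prolongation indices at all.

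The paper closes this gap (in the proof of Theorem \ref{cor:polin}) by a separate descending induction on $d(i)$ for $i\le 0$. Using the structure formulas \eqref{derivatepolinomi}, which hold for every $j\le n$ and not only for $j=1,\dots,n$, one computes $\frac{d}{dt}P_i^v(\gamma)=-\sum_{j=1}^r\sum_{k\le n}c_{ij}^k\,\dot\gamma_j\,P_k^v(\gamma)$; by the grading, $c_{ij}^k\ne 0$ forces $d(k)=d(i)+1$, and those $P_k^v(\gamma)$ vanish by the inductive hypothesis (base case $d(k)=1$, which is the part you did prove). Hence $P_i^v(\gamma)$ is constant along $\gamma$, and it equals $P_i^v(\gamma(0))=P_i^v(0)=0$ by Proposition \ref{lemma-0}(ii) — which rests on the convention $v_k=0$ for $k\le 0$ — together with the hypothesis $\gamma(0)=0$. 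Without this extra step you have only proved the weaker equivalence with ``$P_i^v(\gamma)\equiv 0$ for $i=1,\dots,r$'', not statement (B) as formulated.
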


Abnormal extremals are precisely the horizontal curves 
lying inside algebraic 
varieties defined via extremal polynomials. 
This result extends   \cite[Theorem 1.1]{LLMV}  because  it applies
to {\em nonfree} Carnot
groups. It also improves   that result because  abnormal curves are
shown to be in
algebraic varieties {\em smaller}
than those considered in \cite{LLMV}. Notice that in (B) also indexes 
$i\leq 0$ related to the Tanaka prolongation are involved.

The role of   Tanaka prolongation in our theory is rather subtle.
We   searched for an integration algorithm inverting  the
differentiation process that now is established by the structure formulas
\eqref{TH1}.
For $i=1,\ldots,r$ and  $j=1,\ldots,n$, consider the integrals
\[
 B_{ij}^v(t) = \int_0^tP_i^v(\gamma(s))\dot\gamma_j(s) ds,\quad t\in[0,1].
\]
We may then further integrate $B_{ij}^v$ against $\dot\gamma_k$, etc. The
structure of the Tanaka prolongation tells us, however only from a deductive
point of view,
when the functions $B_{ij}^v(t)$ and their iterative integrals are
\emph{polynomials of the coordinates} $\gamma_1(t),\ldots,\gamma_n(t)$. 
Part of statement (B) in Theorem \ref{ab}
is that an abnormal extremal $\gamma$ satisfies $P^v_i(\gamma)=0$ for some
$v\neq0$ and for all $i=1,\ldots,r$.  
When the above integration process succeds, we get new polynomials
vanishing along the curve $\gamma$.
More details on this point of
view can be found in the example studied in Section \ref{esempi}.


In the final part of the paper, we show two applications of
the theory. 

In Section \ref{esempi}, we develop a technique to construct
a nontrivial algebraic set containing \emph{all} abnormal
extremals passing through one point.
This is related to the problem of estimating
the size of the set of regular values of the
end-point mapping (see \cite{AgrSomeProb} and \cite[Section 10.2]{montgomery}).
In sub-Riemannian manifolds with a distribution of corank
$1$, the image of the set of \emph{length minimizing}
abnormal extremals starting from one point  has 
zero Lebesgue measure; on the other
hand, independently from   corank, the image of \emph{strictly} abnormal 
 {length minimizing}
  extremals has 
empty interior (see Corollary 3 in \cite{RT} and  
\cite{A}). 

Our technique seems to work when 
the prolongation is sufficiently large.
In this case, for each abnormal curve $\gamma$ there is at least one
parameter $v\in\Rn$ and \emph{many} indexes $j$ such that $P_j^v(\gamma)=0$.
It is then possible to find a polynomial $Q$  independent of $v$ 
such that $Q(\gamma)=0$ for any abnormal curve $\gamma$ passing through one
fixed point.
We describe  the technique in detail in the case of the
free nilpotent Lie group of rank $2$ and
step $4$. However, it can be implemented in many other examples
and it is likely to work in any nonrigid group.

Finally, in Section \ref{Goh}  we construct a $64$-dimensional Lie group
possessing a
spiral-like Goh extremal whose tangents at the singular point
are all lines. This example points out  a
limitation of the
shortening technique introduced in \cite{LM} and developed in 
\cite{monti1} and \cite{monti3}. It is also interesting in relation
to the examples of \emph{nonrectifiable} spiral-like rigid paths studied in  
\cite{ZZ} and \cite{Z}, whereas our spiral-like extremal  has finite length.

 \medskip

\emph{Acknowledgements.} It is a pleasure to thank  
Ben Warhurst and Alessandro Ottazzi for many illuminating discussions on Tanaka prolongation.
We also thank Igor Zelenko for some discussions on a preliminary version of the
paper.

\section{Structure formulas for extremal polynomials}

\label{section:polyformulas}

Let $X_1,\ldots ,X_n$ be a basis of $\mathfrak g = \mathrm{Lie}(G)$ adapted to the
stratification. We identify the group $G$ with $\Rn$ via exponential coordinates
of the second
type induced by the basis $X_1,\ldots ,X_n$. Namely, for any $  x =
(x_1,\ldots,x_n) \in\Rn$ we have
\begin{equation} \label{ExpCord}
 x = \ex^{x_1 X_1}\circ \ldots \circ \ex^{x_n X_n}(0) =\exp(x_n
X_n)\cdot\ldots \cdot \exp(x_1 X_1).
\end{equation}
Above, $\exp:\mathfrak g\to G$ is the exponential mapping, $\cdot$ is the
group law in $G = \Rn$, 
and $\gamma(t) = \ex^{t X}(x)$, $t\in\R$, is the solution of the Cauchy Problem
\[
\begin{cases}
\dot\gamma = X(\gamma)\\ 
\gamma(0)=x.
\end{cases}
\]
In simply connected nilpotent groups,
the exponential mapping is a global
diffeomorphism. After the identification \eqref{ExpCord}, 
the Lie algebra $\mathfrak g$
is isomorphic to a Lie algebra of  vector fields in $\Rn$  that are left
invariant with respect to the group law $\cdot$.

We recall the construction of the Tanaka prolongation. 
First, we define the vector space of all strata-preserving derivations of
$\mathfrak g$:
\[
 \mathfrak g_0 = \Big\{ \phi \in \bigoplus_{i=1}^s
\mathfrak g_{i}\otimes \mathfrak g_{i}^{*}:
\phi([X,Y])=[\phi(X),Y]+[X,\phi(Y)],\,\,X,Y\in\mathfrak g\Big\}.
\]
Recall that $\mathfrak g_{i}\otimes \mathfrak g_{i}^{*}$ is
canonically isomorphic to $\Endo(\mathfrak g_{i})$. The direct sum vector space
$\bigoplus_{i=0}^s \mathfrak
g_i$ is a graded Lie algebra with the bracket
\[
 [\phi,X] = -[X,\phi]= \phi(X) ,\quad \textrm{for all $\phi \in\mathfrak g_0$
and $X\in\mathfrak g$},
\]
and with the natural bracket on  $\mathfrak g$ and
$\mathfrak g_{0}$.

By induction, assume that we have a vector space
$\bigoplus_{i=1-k}^s \mathfrak
g_i$ for
some $k\geq 1$ and assume that  the bracket $[\phi, X]=\phi(X)$ is already
defined for all $\phi \in \bigoplus_{i=1-k}^s \mathfrak
g_i$ and $X\in \mathfrak g$. Then we define the vector
space of all derivations $\phi : \mathfrak g \to \mathfrak
g_{1-k}\oplus\ldots \oplus \mathfrak g_{s-k}$ such that $\phi( \mathfrak
g_i)\subset   \mathfrak g_{i-k} $:
\[
 \mathfrak g_{-k}  = \Big\{ \phi \in \bigoplus_{i=1}^s
\mathfrak g_{i-k}\otimes \mathfrak g_{i}^{*}:
\phi([X,Y])=[\phi(X),Y]+[X,\phi(Y)],\,\,X,Y\in\mathfrak g\Big\}.
\]
 Recall that $\mathfrak g_{i-k}\otimes \mathfrak
g_{i}^{*}$ is isomorphic to
$\Hom(\mathfrak g_{i} ; \mathfrak  g_{i-k})$. As above, 
the direct sum vector
space 
$\bigoplus_{i=-k}^s \mathfrak
g_i$
is a graded Lie algebra with the bracket
\[
 [\phi,X]  = -[X,\phi]= \phi(X),\quad \textrm{for all $\phi \in\mathfrak g_{-k}$
and $X\in\mathfrak g$},
\]
with the natural bracket on  $\mathfrak
g$, and with the bracket $[\phi,\psi]$ for $\phi,\psi\in
\mathfrak g_{-k}\oplus\ldots\oplus\mathfrak g_0$ defined by
\[
  [\phi,\psi] (X) = [[\phi,X],\psi] + [\phi,[\psi,X]],\quad  X\in\mathfrak g. 
\]

This inductive construction may or may not end after a finite number of steps,
i.e., we have either $\mathfrak g_{-k} = \{0\}$ for some $k\geq 1$ or $\mathfrak
g_{-k}
\neq \{0\}$ for all $k\geq 1$.
In both cases, we let 
\begin{equation}\label{PRO}
 \Prol(\mathfrak g) = \bigoplus_{k\leq s} \mathfrak g_k.
\end{equation}
$\Prol(\mathfrak g)$ is a \emph{graded}  Lie algebra, called Tanaka prolongation
of $\mathfrak g$. Namely, we have  
\begin{equation}
 \label{GRADO}
[\mathfrak g_i,\mathfrak g_j] \subset \mathfrak g_{i+j}, \quad \textrm{for all
$i,j\in\Z$ with $i,j,i+j\leq s$}.
\end{equation}
This is the unique property that we need in the proof of the structure theorem
of extremal polynomials, Theorem \ref{propderivpolinomi}.
In fact, we need \eqref{GRADO} only for $j=1,\ldots,s$.
We do not specifically need
the   Tanaka prolongation but only a graded Lie algebra extending
$\mathfrak
g$ and satisfying
\eqref{PRO} and \eqref{GRADO}. Among all such extensions 
$\Prol(\mathfrak g)$ is the largest one.

In general, the explicit computation
of $\Prol(\mathfrak g)$ is   difficult.
When $\mathfrak g$ is a \emph{free} nilpotent 
Lie algebra of step $s\geq 3$, then we have
$\Prol(\mathfrak g)=\mathfrak g_0\oplus\mathfrak g$,
with the exceptional case of the step $3$ and rank $2$ free Lie algebra (see
\cite{BEN}).

Let $\{X_j\}_{j\leq n}$ be a basis of $\Prol(\mathfrak g)$.
For integers ${j_0}, {j_1}, {j_2},   \dots,{j_k}\leq n$, we let
\begin{equation}\label{notazione 1}
 [X_{j_0},X_{j_1},X_{j_2}, X_{j_3}, \dots,X_{j_k}]
 = [\cdots[[[X_{j_0},X_{j_1}],X_{j_2}],X_{j_3} ],\dots,X_{j_k}].
\end{equation}
Then, for $\alpha = (\alpha_1, \ldots,
\alpha_n)\in \I=\N^n=\{0,1,2,\ldots\}^n$, we define the
iterated commutator of
$X_1,\ldots, X_n$
\begin{equation}\label{commutatoreiterato}
[\cdot,X_\alpha]= [  \cdot,\underbrace{X_1,\dots,X_1}_{\textrm{$\alpha_1$
times}},\underbrace{X_2,\dots,X_2}_{\textrm{$\alpha_2$
times}},   \dots,\underbrace{X_n,\dots,X_n}_{\textrm{$\alpha_n$
times}}].
\end{equation}
Here, only elements $X_1,\ldots, X_n$ of the basis of $\mathfrak g$ are
involved.
We agree that  $[\cdot,X_{(0,\ldots,0)}]={\rm Id}$. 
The \emph{generalized  structure constants} $c_{i\al}^k\in\R$, with
$\a\in\mathcal I = \N^n$ and $i,k\in\Z$ such that
$i,k\leq n$,  are defined via the relation
\begin{equation}\label{GSC}
  [X_i,X_\a]= \sum_{k\leq n} c^k_{i\a} X_k.
\end{equation} 
The above sum  is always finite. In fact, letting 
$d(\alpha) = \sum_{j=1}^n \alpha_j  d(j)$, we have $c_{i\alpha}^k = 0 $ if
$d(k)\neq d(i)+d(\alpha)$.

For $\al\in\I$ and $i\in\Z$ with $i\leq n$, we define the linear mapping
$\phi_{i\al}\in\Hom( \Rn;\R)$  
\begin{equation}\label{LinM}
 \phi_{i\al}(v) = \frac{(-1)^{|\a|}}{\a!}  \sum_{k\leq n}
 c^k_{i\a}v_k,\quad v=(v_1,\ldots,v_n)\in \Rn.
\end{equation}
Above, we let $\alpha! = \alpha_1 ! \cdot\ldots\cdot \alpha_n!$.
Moreover, we agree that $v_k=0$ for $k\leq 0$.


\begin{defi}\label{Pollo}
For each $i\in\Z$ with $i\leq n$ and $v\in \Rn$, we call the polynomial
$P_i^v:\Rn\to\R$
\begin{equation}\label{gestalt}
P_i^v(x)= \sum_{\a\in \I} \phi_{i\alpha}(v) 
 x^\a,\quad x\in\Rn,
\end{equation}
\noindent
\emph{extremal polynomial}  
of the Lie algebra $\Prol(\mathfrak g)$  with
respect to the basis $\{ X_j\}_{j\leq n}$.
\end{defi}

For any finite set of multi-indexes $\mathcal A\subset \mathcal I  $, consider
the polynomial 
\[
 \displaystyle P(x) = \sum_{\alpha\in\mathcal A } c_\a
x^\alpha,
\]
where  $c_\alpha\in\R $ for all $\alpha\in\A$. The
homogeneous degree
of the polynomial is $d(P) = \max\big\{ d(\alpha) : \alpha \in\mathcal A
\textrm{ such that }c_\alpha \neq0\big\}$.
We say that the  polynomial is homogeneous of degree $k\geq 0$ if $d(\alpha) =
k $ for all $\alpha \in\mathcal A$ such that $c_\alpha\neq0$.

Extremal polynomials  $P_i^v$ are indeed
polynomials. 
In fact, if $d(i)+ d(\alpha)\neq d(k) $ then we 
have $c_{i\alpha}^k =0$. Moreover, we have  $d(k) \leq d(n) = s$ and therefore
the sum
in \eqref{gestalt} ranges over the $\alpha\in\mathcal I$ such that $d(\alpha)
\leq s- d(i)$. So we have $d(P^v_i)\leq s -d(i)$.

The following theorem is the main result of the paper.

\begin{teo}\label{propderivpolinomi}
For any $v\in \Rn$, $i=1,\dots,n$, and $j\in\Z$ with $j\leq n$ there holds 
\begin{equation}\label{derivatepolinomi}
X_i P_j^v = \sum_{k\leq n} c_{ij}^k P_k^v.
\end{equation}
Moreover, the polynomials $\{P_j^v\}_{j\leq n}$ are uniquely determined by
\eqref{derivatepolinomi} 
for $i=1,\ldots,r$
and $P_j^v(0)=v_j$ for $j\leq n$.

\end{teo}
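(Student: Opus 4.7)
The plan is to reinterpret the polynomials $P_j^v$ as matrix coefficients of the (extended) adjoint representation of $G$ on the Tanaka prolongation, and to derive the structure formula from the homomorphism property of $\Ad$. Since every $X_i \in \mathfrak g$ has strictly positive degree $d(i) \geq 1$ and $\Prol(\mathfrak g) = \bigoplus_{k\leq s}\mathfrak g_k$ is bounded above in degree, the grading property \eqref{GRADO} forces $\ad X_i$ to be a nilpotent derivation of $\Prol(\mathfrak g)$; hence each $e^{\ad X_i}$ is a well-defined polynomial automorphism. Because $X \mapsto \ad X$ is a Lie algebra homomorphism $\mathfrak g \to \End(\Prol(\mathfrak g))$ with nilpotent image, the nilpotent BCH formula integrates it to a group homomorphism $\Ad : G \to \Aut(\Prol(\mathfrak g))$, which in the second-type exponential coordinates \eqref{ExpCord} reads
\[
\Ad(x) = e^{x_n \ad X_n} \circ \cdots \circ e^{x_1 \ad X_1}.
\]

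Define the linear functional $\pi_v : \Prol(\mathfrak g) \to \R$ by $\pi_v(X_k) = v_k$, with the convention $v_k = 0$ for $k \leq 0$. The first key step is to verify the identification $P_j^v(x) = \pi_v(\Ad(x) X_j)$. Expanding the exponentials as formal power series in the $x_i$ and using the identity
\[
(\ad X_n)^{\alpha_n} \circ \cdots \circ (\ad X_1)^{\alpha_1}(X_j) = (-1)^{|\alpha|}[X_j, X_\alpha],
\]
which is immediate from \eqref{commutatoreiterato} by induction on $|\alpha|$, one recovers precisely the formula \eqref{gestalt}: the sign $(-1)^{|\alpha|}$ matches the factor in \eqref{LinM} and the $\alpha!$ denominator comes from the exponential series.

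With this identification in hand, the structure formula is almost immediate. By left-invariance, $X_i P_j^v(x) = \frac{d}{dt}\bigl|_{0} P_j^v(x \cdot \exp(tX_i))$, and the homomorphism property gives $\Ad(x \cdot e^{tX_i}) = \Ad(x) \circ e^{t \ad X_i}$. Differentiating at $t=0$ and expanding $[X_i, X_j] = \sum_k c_{ij}^k X_k$ together with the linearity of $\pi_v\circ\Ad(x)$,
\[
X_i P_j^v(x) = \pi_v\bigl(\Ad(x)[X_i, X_j]\bigr) = \sum_{k \leq n} c_{ij}^k \, \pi_v(\Ad(x) X_k) = \sum_{k \leq n} c_{ij}^k P_k^v(x).
\]

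For uniqueness, suppose $\{Q_j\}_{j \leq n}$ also satisfy $X_i Q_j = \sum_k c_{ij}^k Q_k$ for $i = 1, \ldots, r$ and $Q_j(0) = v_j$, and set $R_j = Q_j - P_j^v$. Proceed by downward induction on $d(j)$: when $d(j) = s$, $P_j^v$ is constant and $R_j(0) = 0$ forces $R_j \equiv 0$. For smaller $d(j)$, the hypothesis $X_i R_j = \sum_k c_{ij}^k R_k$ combined with $c_{ij}^k \neq 0 \Rightarrow d(k) > d(j)$ and the inductive assumption give $X_i R_j = 0$ for every $i \leq r$. Bracket-generation of $\mathfrak g_1$ expresses every $X_l$, $l = 1, \ldots, n$, as a linear combination of iterated commutators of $X_1, \ldots, X_r$, and repeated use of $[X_{i_1}, X_{i_2}] R_j = X_{i_1} X_{i_2} R_j - X_{i_2} X_{i_1} R_j$ propagates the vanishing to $X_l R_j = 0$ for every $l \in \{1, \ldots, n\}$. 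Since $\{X_l\}_{l=1}^n$ spans the tangent space at each point, $R_j$ is constant, and $R_j(0) = 0$ concludes $R_j \equiv 0$. The main delicate point of the argument will be to rigorously set up the extended adjoint action $\Ad : G \to \Aut(\Prol(\mathfrak g))$ and check its homomorphism property on the whole prolongation; once this scaffolding is in place, both the structure identity and the uniqueness statement reduce to routine bookkeeping.
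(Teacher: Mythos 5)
Your proposal is correct, but it follows a genuinely different and much more conceptual route than the paper's. You identify $P_j^v(x)=\pi_v\big(\Ad(x)X_j\big)$ with $\Ad(x)=e^{x_n\ad X_n}\circ\cdots\circ e^{x_1\ad X_1}$, the adjoint action of the group element $x=\exp(x_nX_n)\cdots\exp(x_1X_1)$ extended to $\Prol(\mathfrak g)$; your sign and order bookkeeping $(\ad X_n)^{\alpha_n}\circ\cdots\circ(\ad X_1)^{\alpha_1}X_j=(-1)^{|\alpha|}[X_j,X_\alpha]$ does reproduce \eqref{gestalt}--\eqref{LinM} exactly, and then \eqref{derivatepolinomi} drops out of left-invariance together with $\Ad(x\cdot\exp(tX_i))=\Ad(x)\circ e^{t\ad X_i}$. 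The paper instead runs a triple descending induction in exponential coordinates: it first proves the formula on the subgroups $G_i$ (Lemma \ref{L2}) and for $d(i)=s$ (Lemma \ref{4}), then passes from $G_{\ell+1}$ to $G_\ell$ by splitting $X_iP_j$ into the terms $A_{ij}+B_{ij}$ and reducing everything to the bracket identity \eqref{WER}, proved from the Jacobi identity. Notably, the one-parameter version of your key mechanism, $\Ad_{\exp Z}=e^{\ad Z}$, already appears inside the paper's computation of $A_{ij}$ and $B_{ij}$; you globalize it, which collapses the combinatorics into the multiplicativity of a single representation. Your route buys brevity and transparency (the formula for all $i=1,\dots,n$ and all $j\leq n$, as well as $P_j^v(0)=v_j$, come for free from $\Ad(0)=\mathrm{Id}$); the paper's route buys a self-contained coordinate argument that never integrates a representation and whose intermediate facts (Lemma \ref{L1}, the identities on $G_\ell$) are reused elsewhere in the paper.

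Two points should be tightened. First, the paper allows $\Prol(\mathfrak g)$ to be infinite dimensional; in that case $\ad X_i$ is only \emph{locally} nilpotent (it raises degree, but degrees are unbounded below), so ``nilpotent derivation'' is an overstatement and the integration of the representation needs a remark: for a fixed $j$, restrict to the finite-dimensional subspace $\bigoplus_{d(j)\leq k\leq s}\mathfrak g_k$, which contains $X_j$ and is invariant under every $\ad X_i$, $i=1,\dots,n$, and lift the Lie algebra homomorphism there using that $G$ is simply connected. Second, your uniqueness base case as written (``$P_j^v$ is constant and $R_j(0)=0$'') is incomplete: you also need $X_iQ_j=0$, which holds because no $k$ with $d(k)=d(i)+d(j)>s$ exists, so the general inductive step applies verbatim with a vacuous hypothesis. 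With these touch-ups the argument is complete; your constancy step via bracket generation is the same observation the paper invokes right after the statement of the theorem.
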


The identity $P^v_j(0)=v_j$ is proved in \eqref{linux} below.
The uniqueness  follows from this observation:  if $f$
is a smooth function on $G=\Rn$ such that $X_i f = 0 $
for all $i=1,\ldots,r$, then $f$ is   constant. 
Now, let  $\{Q_j^v\}_{j\leq n}$ be a family of polynomials satisfying
\eqref{derivatepolinomi} for $i=1,\ldots,r$, and $Q_j^v(0)=v_j$ for $j\leq n$.
Then, for $d(j) = s$ we have $X_i
P_j^v =0=X_iQ^v_j$ and thus $P_j^v -Q_j^v = P_j^v(0) - Q_j^v(0)=0$.
Assume by induction that $P_j^v =Q_j^v$ for $d(j)\geq \ell+1$
and take $j\leq n$ such that $d(j)=\ell$.
From \eqref{derivatepolinomi}, we have
\[
X_i P_j^v = \sum_{k\leq n} c_{ij}^k P_k^v =  \sum_{k\leq n} c_{ij}^k Q_k^v=
X_i Q_j^v,\quad i=1,\ldots,r,
\]
and thus $P_j^v - Q_j^v = P_j^v (0)- Q_j^v(0) =0 $.

Before starting the proof of  \eqref{derivatepolinomi}, we need three
preliminary lemmas. For  $i=1,\ldots,n$,   let $G_i\subset G = \Rn$ be the
subgroup of $G$ 
\[
\begin{split}
 G_ i & =\big\{ x\in G=\Rn: x_1=\ldots=x_{i-1} =0\big\}
      \\&
     =\big\{ \ex^{x_i X_i}\circ\cdots\circ\ex^{x_n X_n}(0) \in G=\Rn:
x_i,\ldots,x_n\in\R\big\}.
\end{split}
\]
We clearly have  $G_1 = G$. The following lemma is well-known and we   omit
the proof.

\begin{lemma} \label{L1}
Let $X_1,\ldots,X_n$ be vector fields in $\Rn$ satisfying \eqref{ExpCord}.
Then:

 \begin{itemize}
  \item[(i)] For all $i=1,\ldots,n$ and for all $x\in G_i$ we have  
          $X_i(x) = \partial / \partial x _i$.

  \item[(ii)] There exist functions  $f_{i\ell}:\Rn\to\R$, $i,\ell
=1,\ldots,n$, such that  
\[
   X_i(x) =\frac{ \partial}{\partial x _ i}  + \sum_{\ell:d(\ell)>d(i)} f_{i
\ell}(x) \frac{\partial}{\partial x_\ell},\quad x\in G=\Rn.
\]
 \end{itemize}
\end{lemma}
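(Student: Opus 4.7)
The plan is to exploit the polynomial nature of the group law in exponential coordinates of the second type. Since $G$ is simply connected and nilpotent, the Baker--Campbell--Hausdorff series terminates, so the map $y\mapsto \exp(y_nX_n)\cdot\ldots\cdot\exp(y_1X_1)$ identifies $\Rn$ with $G$ in such a way that $(x,y)\mapsto x\cdot y$ is a polynomial map $\Rn\times\Rn\to\Rn$. Since $\exp(tX_i)$ has coordinates $te_i$, left invariance gives the explicit formula
\[
X_i(x)=\frac{d}{dt}\bigg|_{t=0}x\cdot(te_i)=\sum_{\ell=1}^n\frac{\partial(x\cdot y)_\ell}{\partial y_i}\bigg|_{y=0}\frac{\partial}{\partial x_\ell},
\]
so each coefficient $f_{i\ell}(x):=\partial_{y_i}(x\cdot y)_\ell|_{y=0}$ is automatically a polynomial in $x$.

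For part (i) I would argue directly. If $x\in G_i$, then $x_1=\cdots=x_{i-1}=0$ and
\[
x\cdot\exp(tX_i)=\exp(x_nX_n)\cdot\ldots\cdot\exp(x_{i+1}X_{i+1})\cdot\exp((x_i+t)X_i)
\]
is already in canonical second-type form, with coordinates $(0,\ldots,0,x_i+t,x_{i+1},\ldots,x_n)$. Differentiating at $t=0$ yields $X_i(x)=\partial/\partial x_i$.

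For part (ii), I would use the one-parameter family of graded dilations $\delta_r(x)=(r^{d(1)}x_1,\ldots,r^{d(n)}x_n)$. Because the grading $[\mathfrak g_i,\mathfrak g_j]\subset\mathfrak g_{i+j}$ makes $\delta_r$ a Lie algebra automorphism of $\mathfrak g$ (acting as $r^k$ on $\mathfrak g_k$), it is also a group automorphism of $G$. The resulting identity $\delta_r(x\cdot y)=\delta_r(x)\cdot\delta_r(y)$ forces each component $(x\cdot y)_\ell$ to be weighted-homogeneous of degree $d(\ell)$ in $(x,y)$. Extracting the coefficient of $y_i$, which contributes weighted degree $d(i)$, shows that $f_{i\ell}(x)$ is weighted-homogeneous in $x$ of degree $d(\ell)-d(i)$. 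Hence $f_{i\ell}\equiv 0$ when $d(\ell)<d(i)$; it is a constant when $d(\ell)=d(i)$, and evaluating at $x=0$ via part (i) pins that constant down to $\delta_{i\ell}$; and when $d(\ell)>d(i)$ it is a genuine polynomial vanishing at the origin. The point that deserves verification is precisely this weighted homogeneity of the group law, which itself follows cleanly from the graded structure of $\mathfrak g$.
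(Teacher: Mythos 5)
Your proof is correct, and it is complete on the two points that actually need an argument. The paper itself omits the proof of this lemma (it is stated as well-known), so there is no printed argument to compare against; but your route is a clean and standard one. Part (i) is exactly the right computation: since $x\in G_i$ has trivial factors $\exp(x_jX_j)$ for $j<i$, the product $x\cdot\exp(tX_i)$ is again in canonical second-type form with only the $i$-th coordinate shifted by $t$, and differentiation gives $X_i=\partial/\partial x_i$ on $G_i$; note this uses only the exponential coordinates, not the grading. For part (ii), your use of the graded dilations is the key step: the stratification gives $[\mathfrak g_k,\mathfrak g_m]\subset\mathfrak g_{k+m}$, so $\delta_r$ is a Lie algebra automorphism, it exponentiates to a group automorphism of the simply connected group $G$, and in second-type coordinates it acts as the coordinate dilation because it maps each factor $\exp(x_jX_j)$ to $\exp(r^{d(j)}x_jX_j)$. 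The resulting weighted homogeneity of the (polynomial, by nilpotency of BCH) group law then forces $f_{i\ell}=\partial_{y_i}(x\cdot y)_\ell|_{y=0}$ to be weighted homogeneous of degree $d(\ell)-d(i)$, hence zero for $d(\ell)<d(i)$ and constant for $d(\ell)=d(i)$, and evaluating at $x=0$ via part (i) pins these constants to $\delta_{i\ell}$. An alternative, equally common route (hinted at by the structure of the paper's later arguments) is to argue by descending induction on the degree, starting from the observation that any $X_i$ with $d(i)=s$ is central and hence equal to $\partial/\partial x_i$ everywhere; your dilation argument avoids that induction and in addition shows the coefficients $f_{i\ell}$ are polynomials, which is a small bonus beyond the statement.
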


In the following two lemmas, we prove   formula \eqref{derivatepolinomi} in
two simplified situations that will serve as base for a long induction argument.

\begin{lemma}\label{L2}
For any  $i=1,\ldots,n$, $j\in\Z$ with $j\leq n$, and $v\in\Rn$  there holds
\begin{equation}\label{base1}
  X_i P_j^v   = \sum_{k\leq n} c_{ij}^ k P_k^v \quad \textrm{on } G_i.
\end{equation}
\end{lemma}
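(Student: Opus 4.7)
The plan is to reduce everything to the identity $X_i = \partial/\partial x_i$ on $G_i$ (Lemma \ref{L1}(i)), differentiate the polynomial $P_j^v$ term by term, and then match coefficients via a purely Lie-algebraic identity on the generalized structure constants.

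First, I would use Lemma \ref{L1}(i) to replace $X_i P_j^v$ on $G_i$ by $\partial P_j^v/\partial x_i$. On $G_i$ we have $x_1=\cdots=x_{i-1}=0$, so the monomial $x^\alpha$ vanishes unless $\alpha_1=\cdots=\alpha_{i-1}=0$. Differentiating the series \eqref{gestalt} term by term and changing index via $\beta=\alpha-e_i$ (using $\alpha_i=\beta_i+1$, $\alpha!=(\beta_i+1)\beta!$, $|\alpha|=|\beta|+1$) yields, for $x\in G_i$,
\[
X_i P_j^v(x) = -\sum_{\beta:\,\beta_1=\cdots=\beta_{i-1}=0}\frac{(-1)^{|\beta|}}{\beta!}\sum_{l\leq n} c^l_{j,\beta+e_i}\,v_l\,x^\beta.
\]
On the other hand, the right-hand side of \eqref{base1}, restricted to $G_i$ and with the sums rearranged, reads
\[
\sum_{k\leq n} c_{ij}^k P_k^v(x) = \sum_{\beta:\,\beta_1=\cdots=\beta_{i-1}=0}\frac{(-1)^{|\beta|}}{\beta!}\sum_{l\leq n}\Big(\sum_{k\leq n} c_{ij}^k c^l_{k\beta}\Big)v_l\,x^\beta.
\]

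The two expressions then agree (for every $v$) if and only if the identity
\[
c^l_{j,\beta+e_i} = -\sum_{k\leq n} c_{ij}^k\,c^l_{k\beta}
\]
holds for all $l\leq n$ and all $\beta\in\mathcal I$ with $\beta_1=\cdots=\beta_{i-1}=0$. This is the crux, and it is a purely Lie-algebraic fact. Under the hypothesis $\beta_1=\cdots=\beta_{i-1}=0$, the ordered list describing $\beta+e_i$ in \eqref{commutatoreiterato} begins with $X_i$, so one can peel off the first bracket:
\[
[X_j,X_{\beta+e_i}] = \bigl[[X_j,X_i],X_\beta\bigr].
\]
Expanding $[X_j,X_i]=-[X_i,X_j]=-\sum_k c_{ij}^k X_k$ and then applying \eqref{GSC} to $[X_k,X_\beta]$ gives the claimed coefficient identity; reading off the $X_l$-component finishes the proof.

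The main obstacle is the combinatorial one in the last step: verifying carefully that the iterated bracket $[X_j,X_{\beta+e_i}]$ really does equal $[[X_j,X_i],X_\beta]$. This relies essentially on the convention \eqref{commutatoreiterato} together with the assumption that $\beta$ is supported in $\{i,i{+}1,\ldots,n\}$, which guarantees that the extra $X_i$ introduced by $e_i$ is the first bracket to be taken. Once this identity is in hand, the rest of the argument is a bookkeeping exercise in multi-index combinatorics.
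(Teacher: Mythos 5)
Your proposal is correct and follows essentially the same route as the paper: reduce to $\partial/\partial x_i$ on $G_i$ via Lemma \ref{L1}(i), match coefficients to reduce \eqref{base1} to the identity $-c^\ell_{j,\beta+\ex_i}=\sum_{k\leq n}c_{ij}^k c^\ell_{k\beta}$ for $\beta\in\I_i$, and obtain that identity by peeling off the first bracket in $[X_j,X_{\beta+\ex_i}]=[[X_j,X_i],X_\beta]$, which is exactly the paper's use of $-[X_j,X_{\beta+\ex_i}]=[[X_i,X_j],X_\beta]$ in \eqref{cirox}--\eqref{conc}. No gaps; your explicit justification of the bracket-peeling step (that $\beta$ is supported in $\{i,\dots,n\}$ so the extra $X_i$ is bracketed first) is the same fact the paper invokes implicitly.
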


\begin{proof}
 For    $x\in G_i$, we have 
\begin{equation} 
\label{pinco}
 P_j^v(x) = \sum_{\ell \leq n} \sum_{   \alpha \in\I_i  } c_\a
c^\ell_{j\alpha}
v_\ell x^\a, 
\end{equation}
where we let $\I_i = \big\{ \alpha\in \I : \a_1=\ldots=\alpha_{i-1}=0
\big\} $ and
\[
 c_\alpha = \frac{(-1)^{|\alpha|} }{\alpha!}.
\]
By (i) in Lemma \ref{L1}, on $G_i$ we have
$X_i= \partial / \partial x_i$. Then, differentiating \eqref{pinco} we obtain
\begin{equation}
 \label{foro1}
    X_i P_j^v(x) 
             = - \sum_{\ell\leq n} \sum_{\beta \in\I_i}
c_\beta c^\ell_{j,\beta+\ex_i} v_\ell x^\beta,\quad x\in G_i.
\end{equation}

On the other hand,  for  $x\in G_i$ and $v\in \Rn$ we have
\begin{equation}\label{foro2}
 \sum_{k \leq n}  c_{ij}^ k  P_k^v(x) = \sum_{\ell,k\leq n}  \sum_{\beta\in
\I_i}
c_\beta c_{ij}^k c^\ell_{k\beta} v_\ell x^\beta.
\end{equation}

By \eqref{foro1} and \eqref{foro2}, proving the claim \eqref{base1} is
equivalent to show that for
all $\ell \leq n$ and for all $\beta\in\I_i$
we have  
\begin{equation}\label{cirox}
 -   c^\ell_{j,\beta+\ex_i}   =  \sum_{k \leq n}  c_{ij}^k c^\ell_{k\beta}.
\end{equation}

As $\beta\in \I_i$, there holds 
 $-[X_j, X_{\beta+\ex_i}]= 
[[X_i,X_j],X_\beta]$, and thus, by \eqref{GSC} and \eqref{StrCon2},
we obtain 
\begin{equation}\label{conc}
-    \sum_{\ell \leq n} c^\ell_{j,\beta+\ex_i} X_\ell  = 
 -[X_j, X_{\beta+\ex_i}]= 
[[X_i,X_j],X_\beta]
= \sum_{ k \leq n} c_{ij}^k  [X_k,X_\beta]
 =
\sum_{\ell,k \leq n} c_{ij}^k c^\ell_{k\beta}X_\ell
.
\end{equation}
Since $\{X_\ell\}_{\ell\leq n}$ is a basis, this proves \eqref{cirox}.
\end{proof}

\begin{lemma}\label{4} For all $j\in\Z$ with $j\leq n$,   $i=1,\ldots,n$ such
that $d(i)=s$,  
and $v\in \Rn$ there holds
\begin{equation}\label{depoli2}
X_i P_j^v = \sum_{k\leq n} c_{ij}^k P_k^v.
\end{equation}
\end{lemma}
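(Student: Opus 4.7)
The plan is to reduce the identity \eqref{depoli2} to a combinatorial relation between generalized structure constants, and then to derive that relation from the fact that $X_i$ is central in $\mathfrak{g}$ when $d(i)=s$.

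First, since $d(i)=s$, the grading \eqref{GRADO} forces $[X_i,X_\ell]\in\mathfrak g_{s+d(\ell)}=\{0\}$ for every $\ell=1,\dots,n$. Two consequences follow. On the geometric side, Lemma \ref{L1}(ii) applied with $d(i)=s$ has an empty sum, so $X_i=\partial/\partial x_i$ on the whole of $\Rn$, not just on $G_i$. On the algebraic side, $\ad(X_i)$ commutes with every $\ad(X_\ell)$ for $\ell=1,\dots,n$, hence with every composition $\ad(X_n)^{\beta_n}\cdots\ad(X_1)^{\beta_1}$ for $\beta\in\I$.

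Next, I would mimic the computation of Lemma \ref{L2}, but now applied to the full expansion of $P_j^v$ over all of $\Rn$ rather than only over $G_i$. Differentiating \eqref{gestalt} termwise with respect to $x_i$ (using the identity $\alpha_i c_\alpha=-c_{\alpha-e_i}$ when $\alpha_i\geq 1$) gives
\[
X_i P_j^v(x)=\frac{\partial P_j^v}{\partial x_i}(x)
=-\sum_{\ell\leq n}\sum_{\beta\in\I}c_\beta\, c^\ell_{j,\beta+e_i}\,v_\ell\, x^\beta,
\qquad x\in\Rn,
\]
while the right-hand side of \eqref{depoli2} reads
\[
\sum_{k\leq n}c_{ij}^k P_k^v(x)=\sum_{\ell,k\leq n}\sum_{\beta\in\I}c_\beta\, c_{ij}^k c^\ell_{k\beta}\,v_\ell\, x^\beta.
\]
Matching coefficients of $x^\beta$ reduces the claim to showing
\[
-c^\ell_{j,\beta+e_i}=\sum_{k\leq n}c_{ij}^k c^\ell_{k\beta}\quad\text{for every }\beta\in\I,\ \ell\leq n,
\]
which, by the same argument as in \eqref{conc}, is equivalent to the Lie algebra identity
\[
-[X_j,X_{\beta+e_i}]=[[X_i,X_j],X_\beta]\qquad\text{for every }\beta\in\I.
\]

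The main step, and the only one genuinely beyond Lemma \ref{L2}, is proving this last identity for arbitrary $\beta$ (Lemma \ref{L2} handled only $\beta\in\I_i$, where the extra factor $X_i$ sits at the outermost bracket). Converting the iterated commutator notation \eqref{notazione 1} into repeated $\ad$'s yields
\[
[X_j,X_{\beta+e_i}]=(-1)^{|\beta|+1}\ad(X_n)^{\beta_n}\cdots\ad(X_i)^{\beta_i+1}\cdots\ad(X_1)^{\beta_1}(X_j),
\]
\[
[[X_i,X_j],X_\beta]=(-1)^{|\beta|+1}\ad(X_n)^{\beta_n}\cdots\ad(X_1)^{\beta_1}\ad(X_i)(X_j).
\]
Because $\ad(X_i)$ commutes with each $\ad(X_\ell)$ by the centrality observation above, I may move the single factor $\ad(X_i)$ inside the product until it sits between $\ad(X_i)^{\beta_i}$ and $\ad(X_{i-1})^{\beta_{i-1}}$, producing the required equality. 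This is the only delicate point of the proof; once it is in place, combining the matching of coefficients with the corresponding version of \eqref{cirox} completes the argument.
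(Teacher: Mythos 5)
Your proposal follows essentially the same route as the paper: both exploit that $d(i)=s$ forces $X_i=\partial/\partial x_i$ on all of $\Rn$ and makes $X_i$ central in $\mathfrak g$, both reduce \eqref{depoli2} by termwise differentiation to the coefficient identity $-c^\ell_{j,\beta+\ex_i}=\sum_{k\leq n}c_{ij}^k c^\ell_{k\beta}$ for every $\beta\in\I$, and both recognize this as the bracket identity $[X_j,X_{\beta+\ex_i}]=-[[X_i,X_j],X_\beta]$, which is exactly \eqref{stern}. The only real difference is how that identity is established: the paper runs an induction on $|\beta|$ using the Jacobi identity and $[X_k,X_i]=0$, whereas you convert the iterated brackets \eqref{commutatoreiterato} into compositions of $\ad$'s and invoke that $\ad(X_i)$ commutes with every $\ad(X_\ell)$, $\ell=1,\dots,n$ (since $[X_i,X_\ell]=0$ and $\ad$ is a Lie algebra homomorphism on $\Prol(\mathfrak g)$); sliding the single factor $\ad(X_i)$ through the product is a compact repackaging of the same induction and is correct. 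One sign slip should be fixed: with your conventions one has
\begin{equation*}
[[X_i,X_j],X_\beta]=(-1)^{|\beta|}\,\ad(X_n)^{\beta_n}\cdots\ad(X_1)^{\beta_1}\ad(X_i)(X_j),
\end{equation*}
with exponent $|\beta|$, not $|\beta|+1$; as written, your two displays plus the commutation would yield $[X_j,X_{\beta+\ex_i}]=+[[X_i,X_j],X_\beta]$, which has the wrong sign and is not the identity you reduced to. With the corrected exponent the extra minus sign is exactly accounted for, and the argument closes as in \eqref{cirox}--\eqref{conc}, in agreement with the paper.
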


\begin{proof}
 When $j=1,\ldots,n$, the left and right hand sides of \eqref{depoli2}
are   both identically $0$.   Recall that we have $d(P_j^v)\leq s-
d(j)<s$ and thus   $X_i P_j^v=0$.
We prove the claim \eqref{depoli2}
for any $j\leq n$, in particular for $j\leq 0$.
As $d(i) = s$, by (ii) in Lemma \ref{L1} we have $X_i = \partial /\partial
x_i$ on $G=\Rn$.
Then we have formula \eqref{foro1} with $\beta\in\I$ replacing $\beta \in \I_i$.
We  also have formula \eqref{foro2}, again with $\beta\in\I$ replacing $\beta
\in \I_i$. Then we are reduced to check identity \eqref{cirox} for all $\beta
\in\I$.

We claim that, for any $\beta \in \mathcal I$, we have
\begin{equation} \label{stern}
 [X_j, X_{\beta+\ex_i}]= -
[[X_i,X_j],X_\beta].
\end{equation}
We prove \eqref{stern} by induction on the length of $\beta$ and we assume
without loss of generality
that $i=n$. Notice that   the vector field $X_n$ commutes with any vector field
$X_k$, $k=1,\ldots,n$.

Assume that $\beta_k \neq0$ and $\beta _h = 0$ for   $h>k$. Then,
by the Jacobi identity, by $[X_k,X_n]=0$, 
and by  formula \eqref{stern} 
for $\beta - \ex_k$ replacing $\beta$ (and with $i=n$),
we have
\[
 \begin{split}
   [X_j, X_{\beta +\ex_n}] & =  [[ X_j, X_{\beta - \ex_k}], X_k], X_n]
  \\
 & =   -[[X_n, [X_j, X_{\beta-\ex_k}]], X_k]
\\
 & =   [[X_j, X_{\beta-\ex_k+\ex_n}], X_k]
\\
 &=-[[[X_n,X_j],
X_{\beta-\ex_k}], X_k]
\\
 &=-[[X_n,X_j],
X_{\beta}],
 \end{split}
\]
and we are finished.
 
Now, using \eqref{stern} the proof can be concluded as in
\eqref{cirox}-\eqref{conc}.
\end{proof}

\medskip
\begin{proof}[Proof of Theorem \ref{propderivpolinomi}] 
The proof of \eqref{derivatepolinomi} is a triple
nested
induction. We fix the vector  $v\in \Rn$ and we let  $P_j^v=P_j$. The claim in
Theorem \ref{propderivpolinomi} reads
\[ 
  X_iP_j = \sum_{k\leq n}  c_{ij}^k
P_k\quad\textrm{on $G_\ell$ for all $i,\ell=1,\ldots,n$ and $j\leq n$.}
\]

The first induction 
is descending on   $i=1,\ldots,n$ and the base of induction is 
\[
   X_nP_j  =\sum_{k\leq n}  c_{nj}^kP_k\quad \textrm{on
$G$ for all $j\leq  n$}.
\]
This statement holds by  Lemma \ref{4}, because $d(n)=s$.

The first inductive assumption is the following  
\begin{equation}\label{8}
    X_h P_j =\sum_{k\leq n}  c_{hj}^kP_k\quad\textrm{on $G$ for all
$h>i$ and for all   $j \leq n$.}
\end{equation}
Our goal is to prove identity   \eqref{8} also for $h=i$. 
The proof of this claim is a descending induction on   $j\leq n$. 
The base of induction for   $j=n$ is the following
\[
    X_i P_n  =\sum_{k\leq n}  c_{in}^k P_k\qquad\textrm{on $G$.}
\]
This identity holds because we have $X_i P_n =0$ ($P_n$ is constant), and  
$c_{in}^k=0$ for any $i\geq 1$ and for all $k$.

The second inductive assumption is the following  
\begin{equation}\label{9}
   X_i P_h = \sum_{k\leq n}  c_{ih}^k P_k\qquad\textrm{on $G_\ell$ for all 
$h>j$
and for all   $\ell=1,\ldots,n$.}
\end{equation}
The goal is to prove identity  \eqref{9} also for $h=j$.
The proof is a descending induction on  $\ell=1,\ldots,i$.
The base of induction for   $\ell=i$ is the following
\[
   X_i P_j =\sum_{k\leq n} c_{ij}^k P_k\qquad\textrm{on $G_i$},
\]
that is satisfied  by  Lemma \ref{L2}.

The third inductive assumption is  
\begin{equation}
\label{10}
 X_i P_j =\sum_{k\leq n} c_{ij}^k P_k\qquad\textrm{on $G_{\ell+1}$}
\end{equation}
for $\ell+1\leq i$. Our goal is to prove  identity \eqref{10} on
$G_\ell$.

In the following, we can use the three inductive assumptions  
\eqref{8}-\eqref{9}-\eqref{10}.

We prove \eqref{10} at the   point  $x=(0,\ldots,0,x_\ell,\ldots, x_n)\in
G_\ell$. Notice that, by Lemma \ref{L1},  we have  $X_\ell =
\partial /\partial x_\ell$  on $G_\ell$.
For any  $s\in\R$, we let  
\[
\begin{split}
   y_s  & = (0,\ldots,0,s,x_{\ell+1},\ldots,x_n)\in G_{\ell}
\\
&
 = \ex^{s X_\ell}(y_0) 
\\
&
= \ex^{(s-x_\ell) X_\ell}(x),
\end{split}
\]
where $y_0   = (0,\ldots,0,0,x_{\ell+1},\ldots,x_n)\in G_{\ell+1}$.
In particular, we have  $y_{x_\ell} = x$.
Integrating, we obtain the formula 
\begin{equation} \label{star}
\begin{split}
   P_j(x) & = P_j(y_0)+\int_0^{x_\ell}(X_\ell P_j)(y_s) ds
\\
& =  P_j(\ex^{-x_\ell X_\ell}(x)) +\int_0^{x_\ell}(X_\ell
P_j)(\ex^{(s-x_\ell)X_\ell}(x)) ds.
\end{split}
\end{equation}
 
 From the structure (ii) in
Lemma \ref{L1}  for the vector field $X_i$ with $i>\ell$,
we deduce that  for any $z=(0,\ldots,0,z_\ell,\ldots,z_n)\in G_\ell$ and
for any  
$t\in\R$ we have  
\[
\ex^{t X_i}(z) = (0,\ldots,0,z_\ell,*,\ldots,*)\in G_\ell,
\]
and therefore  $\ex^{t X_i}(x) = (0,\ldots,0,x_\ell,*,\ldots,*)$ and
$\ex^{t X_i}(y_s) = (0,\ldots,0,s,*,\ldots,*)$.
Replacing $x$ with 
$\ex^{t X_i}(x)\in G_\ell$ into the identity 
\eqref{star}, we get:
\begin{equation}\label{piox}
\begin{split}
   P_j(\ex^{t X_i}(x)) 
 & = P_j(\ex^{-x_\ell X_\ell}\circ \ex^{t X_i}(x))
+\int_0^{x_\ell}(X_\ell P_j)(\ex^{(s-x_\ell)X_\ell}\circ
\ex^{t X_i}(x)) ds
\\
  & = P_j(\ex^{-x_\ell X_\ell}\circ \ex^{t X_i}
 \circ \ex^{x_\ell X_\ell}  (y_0))
+\int_0^{x_\ell}(X_\ell P_j)(\ex^{(s-x_\ell)X_\ell}\circ
\ex^{t X_i}\circ \ex^{-(s-x_\ell)X_\ell} (y_s)) ds
\\
  & = P_j(y_0\cdot \exp(x_\ell X_\ell)\cdot  \exp(t X_i)\cdot \exp (-x_\ell
X_\ell))
\\
&\quad 
+\int_0^{x_\ell}(X_\ell P_j)(y_s\cdot 
\exp(-(s-x_\ell)X_\ell)\cdot \exp(t X_i)\cdot 
\exp((s-x_\ell)X_\ell ) ds.
\end{split}
\end{equation}

For any $z\in G=\Rn$, we define the left translation $L_z:G\to G$ and
the conjugate mapping
$\Gamma_z:G\to G$
\[
 L_z{y}=z\cdot y\quad \textrm{and} \quad
 \Gamma_z(y) = z\cdot y \cdot z^{-1}, \quad y\in G.
\]
With this notation, identity \eqref{piox} reads
\[
\begin{split}
   P_j(\ex^{t X_i}(x)) 
 & =  P_j(L_{y_0} \Gamma _{ \exp( x_\ell X_\ell)} (\exp({t X_i})))
   +\int_0^{x_\ell}(X_\ell P_j)(L_{y_s} \Gamma_
{\exp(-(s-x_\ell)X_\ell)}  (\exp (tX_i)) )  ds,
\end{split}
\]
and thus 
\[
   X_i P_j (x) = \lim_{t\to0} 
 \frac{P_j(\ex^{t X_i}(x)) - P_j(x)}{t}= A_{ij}(x)+ B_{ij}(x), 
\]
where
\[
\begin{split} 
  A_{ij}(x) & = \lim_{t\to0}
 \frac{1}{t} \big[ 
  P_j(L_{y_0} \Gamma_{\exp(x_\ell X_\ell)} 
     (\exp(t X_i))) - P_j(y_0) \big],\quad \textrm{and}
\\
B_{ij}(x) &= 
\lim_{t\to0} \frac{1}{t} \int_0^{x_\ell} 
 \big[
    X_\ell P_j
  ( L_{y_s} \Gamma _{\exp (-(s-x_\ell) X_\ell)} 
  (\exp( t X_i)))
  - X_\ell P_j(y_s)
\big]ds.
\end{split}
\]

In the following we denote by ${\rm Ad}_z$ the differential of
$\Gamma_z$. Recall that one has the formula  ${\rm Ad}_{\exp
(Z)}=\ex^{{\rm ad} (Z)}$, where ${\rm ad} (Z) (Y) = [Z,Y]$, see
\cite[Proposition 1.91]{knapp}.
We have:
%
\[
\begin{split}
   A_{ij}(x)  & 
%
=
[(dL_{y_0})(\textrm{Ad}_{\exp{x_\ell X_\ell}}(X_i)) ] P_j(y_0)
\\
&
=
\big\{
   X_i +  x_\ell [X_\ell,X_i] +\frac 12 x_\ell^2
 [X_\ell,[X_\ell,X_i]]+\ldots \big\} P_j(y_0)
\\
&
= \big\{
   X_i -  x_\ell [X_i,X_\ell] 
    +\frac 12 x_\ell^2
 [ [X_i,X_\ell], X_\ell]
 -\frac{1}{3!} x_\ell^3 [[[X_i,X_\ell],X_\ell],X_\ell]
+\ldots \big\} P_j(y_0)
\\
&
= \big\{
   X_i +\sum_{p=1}^s \frac{(-1)^p}{p!} x_\ell^p
   [X_i,X_{p \ex_\ell}]\big\}   P_j(y_0)
\\
&
= \big\{
   X_i +\sum_{p=1}^s \sum_{h=1}^n \frac{(-1)^p}{p!} x_\ell^p c^h_{i,p\ex_\ell}
X_h \big\}   P_j(y_0).
\end{split}
\]

Notice that for any $p\geq 1$, we have  $c^h_{i,p\ex_\ell} =0 $ for all $h\leq
i$. For $h>i$ we  can use the inductive assumption 
\eqref{8} and find 
\begin{equation}\label{primo}
    X_hP_j(y_0) = \sum_{k\leq n} c^k_{hj} P_k(y_0).
\end{equation}
Moreover, since $y_0\in G_{\ell+1}$, we can also use the inductive assumption 
\eqref{10} and find 
\begin{equation}\label{secondo}
  X_i P_j(y_0) = \sum_{k\leq n} c^k_{ij} P_k(y_0).
\end{equation}

 By \eqref{primo} and \eqref{secondo}, we obtain the following expression for
$A_{ij}$:
\[
\begin{split}
 A_{ij}(x)  &= 
   \sum_{k\leq n} \Big\{
   c_{ij}^k + \sum_{p=1}^s \sum_{h=1}^n
   \frac{(-1)^p}{p!} x_\ell^p  c_{hj}^k c_{i,p\ex_\ell}^h
\Big\} P_k^v(y_0)
\\
&
= \sum_{k\leq n} \Big\{ 
   c_{ij}^k + \sum_{p=1}^s \Big( 
   \frac{(-1)^p}{p!} x_\ell^p \sum_{h=1}^n c_{hj}^kc_{i,p\ex_\ell}^h
\Big)\Big\} 
\Big[
   \sum_{\a\in\I} \sum_{m\leq n} \frac{(-1)^{|\alpha|}}{\alpha!} c_{k\alpha}^m
v_m
y_0^\alpha
\Big]. 
\end{split}
\]

A similar computation for $B_{ij}(x)$ shows that
%
%
\[
\begin{split}
B_{ij}(x) & = 
\int_0^{x_\ell}
 \lim_{t\to0} \frac 1 t \big[
  X_\ell P_j  (L_{y_s}\Gamma_{\exp(-(s-x_\ell)X_\ell)} (\exp(t X_i))) - X_\ell
P_j (y_s) \big] ds
\\
&
=
\int_0^{x_\ell} 
[(dL_{y_s})(\textrm{Ad}_{(x_\ell-s) X_\ell}(X_i)) ] (X_\ell  P_j) (y_s) ds
\\
&
=
\int_0^{x_\ell} \Big\{ X_i +\sum_{p=1}^s \sum_{u=1}^n
\frac{(-1)^p}{p!}(x_\ell-s)^p c_{i,p\ex_\ell}^u X_u\Big\} (X_\ell P_j) (y_s) ds.
\end{split}
\]

By \eqref{base1}, on   $G_\ell$ we have the identity 
\[
  X_\ell P_j = \sum_{h\leq n}  c_{\ell j}^h P_h,
\]
and since $y_s\in G_\ell$  for all $s$, we obtain
\[
B_{ij}(x)  = 
\int_0^{x_\ell}
  \sum_{h\leq n}  c_{\ell j}^h \Big\{ X_i +\sum_{p=1}^s \sum_{u=1}^n
\frac{(-1)^p}{p!}(x_\ell-s)^p c_{i,p\ex_\ell}^u X_u\Big\} P_h (y_s)
ds.
\]

Notice that $c_{\ell j}^h =0$ for $h\leq j$ and  $c_{i,p\ex_\ell}^u =0$ for
$p\geq 1$ and $ u\leq i$.
For $h>j$ and $u>i$ we can use the inductive assumptions  \eqref{8} and
\eqref{9}. We obtain the following expression for $B_{ij}$:
\[
 \begin{split}
 B_{ij}(x) & =  \int_0^{x_\ell} \sum_{h,k\leq n}  c_{\ell j}^h \Big\{
     c_{ih}^k   + \sum_{p=1}^s \sum_{u=1}^n
\frac{(-1)^p}{p!} (x_\ell-s)^p c_{i,p\ex_\ell}^u c_{uh}^k  \Big\}
P_k(y_s)ds 
\\
&=\int_0^{x_\ell}   \Big[
   \sum_{h,k\leq n}   \sum_{u=1}^n \sum_{p=0}^s  c_{\ell j}^h
\frac{(-1)^p}{p!}(x_\ell-s)^p
c_{i,p\ex_\ell}^u c_{uh}^k\Big]
\Big[
   \sum_{\a\in\mathcal I}\sum_{m\leq n}  v_m \frac{(-1)^{|\alpha|} }{\alpha!}
c_{k,\alpha}^m y_s^\alpha\Big] ds. 
 \end{split}
\]

Using the integrals 
\[
\begin{split}
 \int_0^{x_\ell} (x_\ell-s)^p y_s^\alpha ds & 
 = \int_0^{x_\ell} (x_\ell-s)^p  s^{\alpha_\ell} x_1^{\alpha_1}\cdots \widehat{
x_\ell^{\alpha_\ell}} \cdots x_n^{\alpha_n} \,   ds 
\\
&
= \frac{p! \alpha_\ell!}{(\alpha_\ell+p+1)!} x^{\alpha +(p+1)
\ex_\ell},
\end{split}
\]
we finally have 
\[
\begin{split}
   B_{ij}(x)  &  =\sum_{h,k,m\leq n}  \sum_{u=1}^n  \sum_{p=0}^s
\sum_{\alpha\in\I_\ell} \frac{(-1)^{p+|\alpha|}} {p!\alpha!}
c_{\ell j}^h 
c_{i,p\ex_\ell}^u 
c_{uh}^k  c_{k\alpha}^m\frac{p!\alpha_\ell!}
{(\alpha_\ell+p+1)!} v_m x^{\alpha+(p+1)\ex_\ell}
\\
& 
=-\sum_{h,k,m\leq n}  \sum_{u=1}^n  \sum_{p=0}^s \sum_{\alpha\in\I_\ell}
\frac{(-1)^{|\alpha+(p+1)\ex_{\ell}|}} {(\alpha+(p+1)\ex_\ell)!}
c_{\ell j}^h 
c_{i,p\ex_\ell}^u 
c_{uh}^k  c_{k\alpha}^m
 v_m x^{\alpha+(p+1)\ex_\ell}.
\end{split}
\]

Analogously, we have:
\[
  A_{ij}(x)  = \sum_{h,k,m\leq n}\sum_{p=0}^s\sum_{\alpha\in\I_{\ell+1}}
 \frac{(-1)^p}{p!} c_{i,p\ex_\ell}^h c_{hj}^k x_\ell^p
\frac{(-1)^{|\alpha|}} {\alpha!} c_{k\alpha}^m v_m y_0^\alpha,
\]
where the sum ranges over  $\alpha\in \I_{ \ell+1}$, because  
$y_0 = (0,\ldots,0, x_{\ell+1},\ldots, x_n)$. For $\alpha\in \I_{ \ell+1}$,
we have 
\[
    \frac{(-1)^p}{p!}  \frac{(-1)^{|\alpha|}}{\alpha!}
=   \frac{(-1)^{p+|\alpha|}}{(\alpha+p\ex_\ell)!}  
=   c_{\alpha+ p \ex_\ell},
\]
and, moreover,  $x_\ell^p y_0^\alpha = x^{\alpha + p\ex_\ell}$.
Thus,
\[
  A _{ij}(x) = \sum_{k,h,m\leq n} \sum_{p=0}^s\sum_{\alpha\in\I_{\ell+1}}
  c_{\alpha+p\ex_\ell} 
  c_{i,p\ex_\ell}^h 
  c_{hj}^k  
  c_{k\alpha}^m v_m x^{\alpha+p\ex_\ell}.
\]

Our goal is to prove that the quantity  $X_i P_j(x) = A_{ij}(x)+B_{ij}(x)$,
$x\in G_\ell$,  equals  
\[
  C_{ij}(x) =\sum_{k\leq n}  c_{ij}^k P_k (x)
   = \sum_{k,m\leq n} \sum_{\beta \in I_\ell}
      c_\beta c_{ij}^k c_{k\beta}^m v_m x^\beta.
\]

The coefficient of 
$c_\beta  v_m x^\beta$ in $C_{ij}(x)$ is 
\[
   \chi_{C}^{m,\beta} = \sum_{k\leq n} c_{ij}^k c_{k\beta}^m.
\]

The equation $\alpha + p \ex_{\ell} = \beta$ with
$\alpha_1=\ldots=\alpha_\ell=0$
implies
$p=\beta_\ell$
and $ \alpha =\beta - p \ex_\ell = \beta -\beta_\ell \ex_\ell$.
Thus, the coefficient of $c_\beta  v_m x^\beta$ in $A_{ij}(x)$ is
\[
  \chi_{A}^{m,\beta} = \sum_{k,h\leq n} c_{i,\beta_\ell\ex_\ell}^h c_{hj}^k
c_{k,\beta-\beta_\ell\ex_\ell}^m.
\]

We compute the coefficient $\chi_B^{m,\beta}$ of  $c_\beta  v_m x^\beta$ in
$B_{ij}(x)$.
The equation  $\alpha + (p+1)
\ex_\ell = \beta$ implies the following:  when 
$\beta_\ell=0$ the equation has no solution
and thus  $\chi_B^{m,\beta}=0$; when $\beta_\ell\geq
1$ we have $0\leq p\leq \beta_\ell-1$ and $
   \alpha = \beta - (p+1) \ex_\ell$, and therefore 
\[
   \chi_B^{m,\beta} = - \sum_{h,k\leq n}
\sum_{ u=1}^n \sum_{p=0}^{\beta_\ell-1} c_{\ell j}^h
    c_{i,p\ex_\ell} ^u
    c_{uh}^k
    c_{k,\beta -(p+1) \ex_\ell}^m,\quad \beta_\ell\geq 1.
\]

When  $\beta_\ell=0$, the identity $\chi_A^{m,\beta} +\chi_B ^{m,\beta}
=
\chi_C ^{m,\beta} $ is easily verified.

When  $\beta_\ell\geq 1$, 
the identity $\chi_A^{m,\beta} +\chi_B ^{m,\beta}
=
\chi_C ^{m,\beta} $ is proved as soon as we verify the following 
\begin{equation}\label{lop}
\sum_{k\leq n}  c_{ij}^k c_{k\beta}^m =
\sum_{k,h\leq n} c_{i,\beta_\ell\ex_\ell}^k  c_{hj}^k
c_{k,\beta-\beta_\ell\ex_\ell}^m
-
\sum_{h,k\leq n} \sum_{ u=1}^n \sum_{p=0}^{\beta_\ell-1} c_{\ell j}^h 
    c_{i,p\ex_\ell} ^u
    c_{uh}^k
    c_{k,\beta -(p+1) \ex_\ell}^m .
\end{equation}
Contracting with  $X_m$ the left hand side of \eqref{lop} we find 
\begin{equation}\label{bia}
\begin{split}
  \sum_{m,k\leq n}   c_{ij}^k c_{k\beta}^m X_m & =
   \sum_{k\leq n}  c_{ij}^k[X_k, X_\beta]     
   = [[X_i,X_j],X_\beta]
\\&
   = [[[X_i,X_j],X_{\beta_\ell\ex_\ell}], X_{\beta-\beta_\ell\ex_\ell}].
\end{split}
\end{equation}
Contracting with $X_m$ the first term in the right hand side of \eqref{lop}
we
find 
\begin{equation}\label{bio}
\begin{split}
\sum_{k,h,m\leq n} c_{i,\beta_\ell\ex_\ell}^h  c_{hj}^k
c_{k,\beta-\beta_\ell\ex_\ell}^m X_m
&    = \sum_{k,h\leq n} c_{i,\beta_\ell\ex_\ell}^h  c_{hj}^k
[X_k,X_{\beta-\beta_\ell\ex_\ell}]
\\
&
   =  \sum_{h=1}^n c_{i,\beta_\ell\ex_\ell}^h  
[[X_h,X_j],X_{\beta-\beta_\ell\ex_\ell}]
\\&
   = [[[X_i,X_{\beta_\ell\ex_\ell} ],X_j],X_{\beta-\beta_\ell\ex_\ell}].
\end{split}
\end{equation}
Contracting the  second term in the right hand side of \eqref{lop} we find
\begin{equation}\label{biu}
-
\sum_{h,k,m\leq n} \sum_{ u=1}^n \sum_{p=0}^{\beta_\ell-1} c_{\ell j}^h 
    c_{i,p\ex_\ell} ^u
    c_{uh}^k
    c_{k,\beta -(p+1) \ex_\ell}^m X_m
= \sum_{p=0}^{\beta_\ell-1}
[[[[X_i,X_{p\ex_\ell}],[X_j,X_\ell]],X_{(\beta_\ell-p-1)\ex_\ell}],X_{
\beta-\beta_\ell \ex_\ell}].
\end{equation}
We used $\beta_1=\ldots=\beta_\ell=0$.

In \eqref{bia}, \eqref{bio}, and \eqref{biu}, commutators have the same tail
$X_{\beta - \beta_\ell \ex_\ell}$. Thus the claim \eqref{lop} follows provided
that we prove the following identity:
\begin{equation}\label{WER}
[[X_i,X_j],X_{\beta_\ell\ex_\ell}]
=
[[X_i,X_{\beta_\ell\ex_\ell} ],X_j]
+\sum_{p=0}^{\beta_\ell-1}
[[[X_i,X_{p\ex_\ell}],[X_j,X_\ell]],X_{(\beta_\ell-p-1)\ex_\ell}].
\end{equation} 

We prove \eqref{WER} by induction on  $\beta_\ell\geq 1$.
The base of induction for  $\beta_\ell=1$ is  
\[
[[X_i,X_j],X_\ell] =
[[X_i,X_\ell],X_j]
+ [X_i,[X_j,X_\ell]],
\]
and this holds by the Jacobi identity.

By induction, we assume that  \eqref{WER} holds for $\beta_\ell$ and we prove
it  for $\beta_\ell+1$.
We have:
\[
\begin{split}
[[X_i,X_j],X_{(\beta_{\ell+1} )\ex_\ell}]
& = [[[X_i,X_j], X_{\beta_\ell\ex_\ell}], X_\ell]
\\&
=[[[X_i,X_{\beta_\ell\ex_\ell} ],X_j],X_\ell]
+\sum_{p=0}^{\beta_\ell-1}
[[[[X_i,X_{p\ex_\ell}],[X_j,X_\ell]],X_{(\beta_\ell-p-1)\ex_\ell}],X_\ell]
\\
&
= [[X_i,X_{(\beta_\ell+1)\ex_\ell} ],X_j]
+ [[X_i,X_{\beta_\ell\ex_\ell}],[X_j,X_\ell]]
\\
&\qquad
+\sum_{p=0}^{\beta_\ell-1}
[[[X_i,X_{p\ex_\ell}],[X_j,X_\ell]],X_{(\beta_\ell-p)\ex_\ell}]
\\
&
=
[[X_i,X_{(\beta_\ell+1)\ex_\ell} ],X_j]
+\sum_{p=0}^{\beta_\ell}
[[[X_i,X_{p\ex_\ell}],[X_j,X_\ell]],X_{(\beta_\ell-p)\ex_\ell}].
\end{split}
\] 
This finishes the proof of   Theorem \ref{propderivpolinomi}.
\end{proof}

In the next proposition, we list some elementary properties of extremal
polynomials that are used in Section \ref{section:alg-abn-extr}.

\begin{prop}\label{lemma-0}
Extremal polynomials have the following properties. 

{\upshape (i)}  For all  $i=1,\ldots,n $ and $v\in\Rn$, we have
$P_i^v(0)=v_i$. 

{\upshape (ii)} For all  $i\in\Z$ with $i\leq 0 $ and $v\in\Rn$, we have
$P_i^v(0)=0$. 

{\upshape (iii)} If $v\in\R^n$ is such that $P_i^{v}=0$ for all
$i=1,\dots,n$ with $d(i)=1$, then $v=0$.
\end{prop}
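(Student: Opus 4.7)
The plan is as follows. Parts (i) and (ii) follow directly from the definition \eqref{gestalt}, while (iii) is handled by an ascending induction on the degree $d(m)$, using the structure formula of Theorem \ref{propderivpolinomi} to relate iterated $X_j$-derivatives of the $P_i^v$ evaluated at the origin to the components $v_m$.

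For (i) and (ii), evaluation at $0$ kills every term in \eqref{gestalt} except the one with $\alpha=0$. The convention $[\cdot,X_{(0,\ldots,0)}]=\mathrm{Id}$ gives $[X_i,X_{(0,\ldots,0)}]=X_i$, so the generalized structure constants satisfy $c_{i,0}^k=\delta_{ik}$. Combined with \eqref{LinM}, this yields $P_i^v(0)=\phi_{i,0}(v)=v_i$, which proves (i) for $1\le i\le n$ and, via the convention $v_k=0$ for $k\le 0$, also yields (ii) for $i\le 0$.

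For (iii), assume $P_i^v=0$ for every $i\in\{1,\ldots,r\}$. Evaluating at $0$ and using (i) gives $v_i=0$ for $d(i)=1$. Now argue by induction on $\ell\ge 1$, assuming $v_m=0$ whenever $d(m)\le\ell$ and showing the same for $d(m)=\ell+1$. Fix $i,j_1,\ldots,j_\ell\in\{1,\ldots,r\}$. Iterating Theorem \ref{propderivpolinomi} $\ell$ times gives
\[
X_{j_\ell}\cdots X_{j_1}P_i^v=\sum_{m\le n}b_m\,P_m^v,
\]
where $b_m$ is the coefficient of $X_m$ in the iterated bracket $[X_{j_\ell},[X_{j_{\ell-1}},[\cdots,[X_{j_1},X_i]\cdots]]]\in\mathfrak g_{\ell+1}$, so in particular $b_m=0$ unless $d(m)=\ell+1$. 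The left hand side vanishes by hypothesis; evaluating at $0$ and using (i)--(ii) together with the inductive hypothesis reduces the identity to
\[
\sum_{m:\,d(m)=\ell+1}b_m\,v_m=0.
\]

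The conclusion rests on the stratification identity $\mathfrak g_{\ell+1}=[\mathfrak g_1,\mathfrak g_\ell]$, which by iteration shows that the brackets above, as $i,j_1,\ldots,j_\ell$ vary in $\{1,\ldots,r\}$, span $\mathfrak g_{\ell+1}$. Hence the coefficient vectors $(b_m)_{d(m)=\ell+1}$ span $\R^{\dim\mathfrak g_{\ell+1}}$, and the orthogonality condition above forces $v_m=0$ for every $m$ with $d(m)=\ell+1$. Induction then gives $v=0$. The only substantive step is the iteration of Theorem \ref{propderivpolinomi}, which is routine; the rest is bookkeeping with the stratification.
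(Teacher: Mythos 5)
Your proofs of (i) and (ii) coincide with the paper's: evaluate \eqref{gestalt} at the origin, note $c_{i0}^k=\delta_{ik}$, and invoke the convention $v_k=0$ for $k\leq 0$. For (iii), however, you take a genuinely different route. The paper proves the stronger intermediate statement that $P_k^v\equiv 0$ \emph{as polynomials} for every $k=1,\dots,n$, by ascending induction on $d(k)$: it writes $X_k=\sum d_{ij}^k[X_i,X_j]$ with $d(i)=1$, $d(j)=\ell$ using $\mathfrak g_{\ell+1}=[\mathfrak g_1,\mathfrak g_\ell]$, applies Theorem \ref{propderivpolinomi} once per step to the already-vanishing $P_j^v$, and only at the very end uses (i) to get $v=0$. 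You instead iterate the structure formula along words $X_{j_\ell}\cdots X_{j_1}$ of horizontal fields, identify the resulting coefficients $b_m$ with those of the right-nested bracket $[X_{j_\ell},[\dots,[X_{j_1},X_i]\dots]]\in\mathfrak g_{\ell+1}$ (correct, by the index convention $X_iP_j^v=\sum_k c_{ij}^kP_k^v$), evaluate at the origin, and conclude by a spanning/duality argument: since such right-nested brackets span $\mathfrak g_{\ell+1}$ (which indeed follows by iterating $\mathfrak g_{\ell+1}=[\mathfrak g_1,\mathfrak g_\ell]$ and antisymmetry), the relations $\sum_{d(m)=\ell+1}b_mv_m=0$ force $v_m=0$ degree by degree. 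Both arguments rest on Theorem \ref{propderivpolinomi} plus the stratification; the paper's buys the stronger fact that all positive-index extremal polynomials vanish identically, while yours is a more direct attack on $v$ at the price of the (easy but necessary) spanning lemma for right-nested brackets, which you do justify. Two minor remarks: your ``inductive hypothesis'' $v_m=0$ for $d(m)\le\ell$ is never actually used, since the grading already restricts the identity to indices of degree $\ell+1$, so the levels decouple; and note that your hypothesis must be, as in the statement, that the $P_i^v$ with $d(i)=1$ vanish identically (not merely at a point), which is exactly what lets you kill the left-hand side of the iterated identity.
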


\begin{proof}
(i) For any $i=1,\ldots,n$, we have
\begin{equation}\label{linux}
   P_i^v(0) =  \phi_{i0}(v) =  \sum_{k\leq n} c_{i0}^k v_k = v_i, 
\end{equation}
because $c_{i0}^k = \delta_{ik}$, the Kronecker symbol.

(ii) This follows   as in (i) from the agreement that $v_k=0$ for $k\leq 0$.

(iii) Assume that $P^v_k=0$ when $d(k)=1$. We claim that $P^v_k=0$ for 
 all $k=1,\ldots, n$. Because of (i), this will imply $v=0$.
The proof is by induction on $d(k)$.
Assume that $P^v_k=0$ for all $k=1,\ldots,n$ such that
$d(k) \leq \ell$, where $\ell<s$.
Take $k=1,\ldots, n$ such that $d(k) = \ell+1$.
By the stratification, there are constants $d_{ij}^k\in\R$,
$d(i)=1$ and $d(j) = \ell$, such that
\[
    X_k = \sum_{\substack { d(i)=1 \\ d(j) = \ell}} d_{ij}^k [X_i,X_j]
    = \sum_{\substack { d(i)=1 \\ d(j) = \ell}}  \sum_{\substack
{ h\leq n }}  
   d_{ij}^k c_{ij}^h X_h,
\]
and thus we have the identity
\begin{equation}
 \label{stirox}
   \sum_{\substack {d(i)=1 \\  d(j) = \ell}  }
   d _{ij} ^k c_{ij}^h = \delta_{hk}.
\end{equation}
From $P_j ^v =0$  for $d(j)=\ell$, it follows that $X_i P_j^v=0$. Now, from
Theorem \ref{propderivpolinomi} and \eqref{stirox} we obtain
\[
   0 = \sum_{\substack { d(i) = 1 \\  d(j) = \ell}} d_{ij}^k X_i P_j^v 
    = \sum_{\substack {d(i) = 1\\ d(j) = \ell}} \sum_{h\leq n} 
d_{ij}^k c_{ij}^h P_h^v= P_k^v.     
\]
This finishes the proof.
\end{proof}

\section{Algebraic characterization of abnormal extremals}
\label{section:alg-abn-extr}

In this section, we prove the   theorems about the algebraic characterization
of  abnormal extremals. The Lie
group $G$ is identified with $\Rn$ via exponential coordinates 
as in \eqref{ExpCord}, where $X_1,\ldots,X_n$ is a basis of
$\mathfrak g = \mathrm{Lie}(G)$ adapted to the stratification. 
%
%
%
The vector fields $X_1,\ldots,X_r$ are a basis for $\mathfrak
g_1$ (and thus  generators for $\mathfrak g$).
The distribution of $r$-planes 
$\mathcal D(x)
= \mathrm{span}\{X_1(x) , \ldots,X_r(x) \}$,   $x\in G$, is called
{horizontal distribution} of $G$.
A Lipschitz curve
$\ga:[0,1]\to G=\Rn$ is $\mathcal D$-{horizontal}, or simply
 {horizontal}, if there exists
a vector of functions $h=
(h_1,\ldots,h_r)\in  
L^\infty([0,1];\R^r)$ such that
\[
\dot\ga =
 \sum_{j=1}^r h_j X_j(\ga),\quad \text{a.e. on }[0,1].
\]
The functions $h$ are called controls of $\ga$ and, when $\gamma:[0,1]\to\Rn$
is given by the coordinates  $\gamma=(\gamma_1,\ldots,\gamma_n)$, 
we have $h_j = \dot\gamma_j$, $j=1,\ldots,r$.
This follows from the structure of the vector fields $X_1,\ldots,X_r$ described
in Lemma \ref{L1}.

Let $g_x$ be the quadratic form on $\D(x)$ 
making $X_1,\ldots,X_r$ orthonormal.
The
horizontal length of a horizontal curve $\ga$ is   
\[
 L(\gamma) = \Big( \int_0^1 g_{\ga(t)}(\dot\gamma(t) )dt\Big)^{1/2} 
=\Big( \int_0^1|h(t)|^2 dt\Big)^{1/2} .
\]
For any couple of points $x,y\in G$, we can define the function 
\begin{equation}\label{dist}
 d(x,y) = \inf\Big\{ L(\ga): \text{$\ga$ is  horizontal, $\ga(0)=x$
and $\ga(1)=y$} \Big\}.
\end{equation}
The above set is always nonempty  and $d$ is a distance on
$G$.
A Lipschitz curve $\ga$ providing the  minimum in \eqref{dist}
is called a \emph{length  minimizer}.


We denote by $\vartheta_1,\ldots,\vartheta_n$ the basis 
of $\mathfrak g^*$ dual to the basis $X_1,\ldots,X_n$ of $\mathfrak g$. A
Lipschitz curve of $1$-forms
$\lambda\in \mathrm{Lip}([0,1];\mathfrak g^*)$
is a curve  
 \[ \lambda  = \lambda_1 \vartheta_1+\ldots+\lambda_n
\vartheta_n,\] where $\lambda_i:[0,1]\to\R$, $i=1,\ldots,n$, are Lipschitz
functions.
We call $\lambda_1,\ldots,\lambda_n$ the coordinates of $\lambda$.

\begin{teo}\label{teo:eqdualvar1} Let $\ga$ be a length minimizer in $G$. 
Then there exist a number  $\lambda_0\in\{0,1\}$ and a Lipschitz curve
$\lambda \in \mathrm{Lip}([0,1];\mathfrak g^* )$ with coordinates
$\lambda_1,\ldots,\lambda_n$
such that:
\begin{itemize}
 \item[(i)] $\lambda_0+|\lambda|\neq 0$ on $[0,1]$;

 \item[(ii)] $\lambda_0 \dot\gamma_j +\lambda_j =0$ on
$[0,1]$ for all $j=1,\ldots, r$; 

 \item[(iii)]  for all    $i=1,\ldots,n$, we have
\begin{equation}\label{81}
\dot\lambda_i = - \sum_{k=1}^n \sum_{j=1}^r  c_{ij}^k  \dot \gamma_j 
\lambda_k\quad\text{a.e.~on }[0,1],
\end{equation}
where $c_{ij}^k$ are the structure constants \eqref{GSC}.
\end{itemize}
\end{teo}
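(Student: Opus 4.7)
The plan is to obtain this statement as a consequence of the Pontryagin Maximum Principle (PMP) applied to the sub-Riemannian optimal control problem, and then to translate the conclusion from canonical coordinates on $T^*G$ into the coordinates on $\mathfrak g^*$ provided by the left-invariant coframe $\vartheta_1,\dots,\vartheta_n$. After reparametrizing $\gamma$ to constant speed, one can equivalently view it as a minimizer of the energy $\tfrac12\int_0^1|h(t)|^2\,dt$ under the control system $\dot\gamma=\sum_{j=1}^r h_j X_j(\gamma)$ with prescribed endpoints. PMP then produces a scalar $\lambda_0\in\{0,1\}$ and an absolutely continuous covector curve $t\mapsto\Lambda(t)\in T^*_{\gamma(t)}G$, with $(\lambda_0,\Lambda)$ nowhere vanishing, such that the Hamiltonian $\mathcal H(x,p,h)=\langle p,\sum_j h_j X_j(x)\rangle-\tfrac{\lambda_0}{2}|h|^2$ is maximized in $h$ along $(\gamma,\Lambda)$ and the adjoint equation $\dot\Lambda=-\partial_x\mathcal H$ holds a.e.

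Next I would pass to $\mathfrak g^*$ by setting
\begin{equation*}
\lambda_i(t) := \langle \Lambda(t),X_i(\gamma(t))\rangle,\qquad i=1,\dots,n.
\end{equation*}
Since $X_1,\dots,X_n$ form a left-invariant frame on $G$ with dual left-invariant coframe extending $\vartheta_1,\dots,\vartheta_n$, this is precisely the coordinate representation of a curve $\lambda=\sum_i\lambda_i\vartheta_i\in\mathfrak g^*$. The $\lambda_i$ inherit Lipschitz regularity from $\Lambda$; the nontriviality clause of PMP, together with the fact that $\{X_i(\gamma(t))\}$ spans $T_{\gamma(t)}G$, yields (i). The maximality condition $\partial\mathcal H/\partial h_j=0$ for $j=1,\dots,r$ gives $\lambda_j=\lambda_0 h_j$, and since $h_j=\dot\gamma_j$ by Lemma \ref{L1}(ii), this delivers (ii), up to the standard sign convention in the Hamiltonian formulation.

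The core computation, and the step requiring the most care, is deriving (iii). Writing $\Lambda=\sum_k\Lambda_k\,dx^k$ and $X_j=\sum_k X_j^k\partial_k$ in canonical coordinates, one has
\begin{equation*}
\dot\lambda_i=\dot\Lambda_k X_i^k(\gamma)+\Lambda_k(\partial_\ell X_i^k)(\gamma)\,\dot\gamma^\ell.
\end{equation*}
Substituting $\dot\Lambda_k=-\sum_j h_j\Lambda_m\partial_k X_j^m$ from the adjoint equation and $\dot\gamma^\ell=\sum_j h_j X_j^\ell$, and then relabelling indices, the right-hand side reorganises, via the local formula $[X_j,X_i]^m=X_j^k\partial_k X_i^m-X_i^k\partial_k X_j^m$ for the Lie bracket, into
\begin{equation*}
\dot\lambda_i=\sum_{j=1}^r h_j\,\Lambda_m[X_j,X_i]^m=-\sum_{j=1}^r\sum_{k\leq n}c_{ij}^k h_j\,\lambda_k,
\end{equation*}
where the last equality uses $[X_i,X_j]=\sum_k c_{ij}^k X_k$. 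Combined with $h_j=\dot\gamma_j$ this is exactly (iii). The main obstacle is the index bookkeeping in this last rearrangement; conceptually, the calculation amounts to the fact that, under the left-invariant trivialization $T^*G\cong G\times\mathfrak g^*$, the PMP adjoint flow is the coadjoint action and is governed purely by the structure constants of $\mathfrak g$ and the horizontal controls $\dot\gamma_1,\dots,\dot\gamma_r$.
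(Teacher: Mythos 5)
Your proposal is correct and takes essentially the same route as the paper, which does not prove Theorem \ref{teo:eqdualvar1} itself but invokes the Pontryagin Maximum Principle (\cite[Chapter 12]{AS}) together with the derivation of the adjoint equations in the left-invariant frame from \cite[Theorem 2.6]{LLMV} --- exactly the PMP-plus-structure-constants computation you carry out, with $\lambda_i=\langle \Lambda, X_i(\gamma)\rangle$ and the bracket identity turning the canonical adjoint equation into \eqref{81}. The only cosmetic point is the sign in (ii): with your Hamiltonian the maximality condition gives $\lambda_j=\lambda_0 h_j$, and the paper's convention is recovered by replacing $\lambda$ with $-\lambda$, which is harmless since (i) and the linear homogeneous equation (iii) are unaffected.
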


\noindent 

Theorem \ref{teo:eqdualvar1} is a version of Pontryagin Maximum Principle
adapted to the present setting.
Equations \eqref{81} are called  \emph{adjoint equations}.
We refer to \cite[Chapter 12]{AS} for a proof of Theorem
\ref{teo:eqdualvar1} in a more general framework. 
The version \eqref{81} of the adjoint equations is derived in \cite{LLMV},
Theorem 2.6. The curve $\lambda$ is called a \emph{dual curve}
of $\gamma$.

\begin{defi}   
We say that a   horizontal curve
$\ga:[0,1]\to G$  is an \emph{extremal} if there exist
$\lambda_0\in\{0,1\}$   
and a curve of $1$-forms
$\lambda\in\mathrm{Lip}([0,1];\mathfrak g^*)$
such that i), ii), and iii) in Theorem \ref{teo:eqdualvar1} hold.

We say that $\ga$ is a \emph{normal extremal} if there exists such a pair
$(\lambda_0,\lambda)$  with $\lambda_0=1$.  

We say that $\ga$ is an \emph{abnormal extremal}
if there exists such a pair with $\lambda_0=0$. 

We say that $\ga$ is a
\emph{strictly abnormal extremal} if $\ga$ is an abnormal extremal but not a
normal one.
 \end{defi}

\begin{remark} If $\gamma$ is an abnormal extremal with dual curve $\lambda$,
then we have
\begin{equation}
   \label{condnecessabnormali}
   \lambda_1 = \ldots = \lambda_r = 0 \quad \text{on }[0,1].
\end{equation}
This follows from condition ii) of Theorem \ref{teo:eqdualvar1} with
$\lambda_0=0$.
\end{remark}

\begin{teo}\label{teointduale}
Let $G$ be a stratified nilpotent Lie group, let $\ga:[0,1]\to G$
be a horizontal curve such that
$\ga(0)=0$, and let $  \lambda_1,\ldots,\lambda_n:[0,1]\to\R$ be
Lipschitz functions. The following statements are
equivalent:

\begin{itemize}
 \item[(A)] The functions $\lambda_1,\ldots,\lambda_n$  solve a.e.~the system of
equations \eqref{81}.

 \item[(B)] There exists $v\in\Rn$  such that, for all $i=1,\ldots,n$,
we have
\begin{equation}\label{eqlaP}
\la_i(t) = P_i^{v}(\ga(t)),\quad t\in[0,1],
\end{equation}
and in fact $v=(\la_1(0),\ldots,\la_n(0))$.
\end{itemize}
\end{teo}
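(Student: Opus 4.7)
The plan is to argue (B)$\Rightarrow$(A) by direct differentiation using the structure formula \eqref{derivatepolinomi}, and then to obtain (A)$\Rightarrow$(B) as a consequence of the uniqueness of solutions to the linear Cauchy problem \eqref{81}.

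First I would handle (B)$\Rightarrow$(A). Assume $\lambda_i(t)=P_i^v(\gamma(t))$ for all $i=1,\ldots,n$. Since $\gamma$ is horizontal, we may write $\dot\gamma(t)=\sum_{j=1}^r \dot\gamma_j(t)\,X_j(\gamma(t))$ a.e., as recalled right after the definition of horizontal curve. By the chain rule (valid a.e.\ since $P_i^v$ is polynomial and $\gamma$ is Lipschitz) and Theorem \ref{propderivpolinomi} applied with the roles of the two lower indices swapped,
\begin{equation*}
\dot\lambda_i(t)=\sum_{j=1}^r \dot\gamma_j(t)\,(X_j P_i^v)(\gamma(t))
=\sum_{j=1}^r\sum_{k\leq n} c_{ji}^k\,\dot\gamma_j(t)\,P_k^v(\gamma(t)).
\end{equation*}
Using antisymmetry $c_{ji}^k=-c_{ij}^k$ of the structure constants and substituting $\lambda_k(t)=P_k^v(\gamma(t))$ yields exactly \eqref{81}. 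Note that although the formal sum in \eqref{derivatepolinomi} runs over $k\leq n$ (including indices from the prolongation), it is finite for each fixed $i,j$ by the grading, and the contribution from $k\leq 0$ agrees with the convention $\lambda_k=0$ for $k\leq 0$ implicit in the definition \eqref{LinM}, so the two sums truly match.

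For (A)$\Rightarrow$(B), set $v=(\lambda_1(0),\ldots,\lambda_n(0))\in\R^n$ and define the auxiliary Lipschitz functions $\tilde\lambda_i(t):=P_i^v(\gamma(t))$ for $i=1,\ldots,n$. By part (i) of Proposition \ref{lemma-0} and $\gamma(0)=0$, we have $\tilde\lambda_i(0)=P_i^v(0)=v_i=\lambda_i(0)$. By the direction already proved, $\tilde\lambda_1,\ldots,\tilde\lambda_n$ satisfy \eqref{81}. Therefore both $\lambda$ and $\tilde\lambda$ solve the \emph{same} linear ODE system
\begin{equation*}
\dot\mu_i(t)=-\sum_{k=1}^n\Big(\sum_{j=1}^r c_{ij}^k\,\dot\gamma_j(t)\Big)\mu_k(t),\quad i=1,\ldots,n,
\end{equation*}
whose coefficient matrix is in $L^\infty([0,1];\R^{n\times n})$ (indeed bounded, since $\dot\gamma_j\in L^\infty$), with the same initial datum at $t=0$. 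By the standard uniqueness theorem for linear ODEs with $L^\infty$ coefficients, $\lambda_i\equiv\tilde\lambda_i$ on $[0,1]$, which is exactly \eqref{eqlaP}.

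The only genuinely delicate point is bookkeeping: one must be careful that the summation range $k\leq n$ in the structure formula \eqref{derivatepolinomi} (which may extend into the Tanaka prolongation) is compatible with the sum over $k=1,\ldots,n$ in the adjoint equations \eqref{81}. This is resolved by the convention $v_k=0$ for $k\leq 0$ that enters the very definition of $P_i^v$, so that terms with $k\leq 0$ in the structure formula contribute $P_k^v=0$ after evaluation, and by the grading constraint $c_{ij}^k=0$ unless $d(k)=d(i)+d(j)$, which in any case forces the effective sum to be finite. Once this is checked, the argument is purely formal.
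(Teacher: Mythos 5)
Your proposal is correct and follows essentially the same route as the paper: (B)$\Rightarrow$(A) by differentiating $P_i^v\circ\gamma$ along the horizontal curve and invoking the structure formula \eqref{derivatepolinomi} together with antisymmetry of the structure constants, and (A)$\Rightarrow$(B) by uniqueness for the linear ODE system \eqref{81} with bounded coefficients, matching the initial data via Proposition \ref{lemma-0} and $\gamma(0)=0$. One small correction to your bookkeeping remark: the terms with $k\le 0$ are absent not because $P_k^v=0$ for $k\le 0$ (the prolongation polynomials are not identically zero; by Proposition \ref{lemma-0} they merely vanish at the origin), but because for $i\ge 1$ and $j\in\{1,\dots,r\}$ the grading gives $c_{ij}^k=0$ unless $d(k)=d(i)+d(j)\ge 2$, so no index $k\le 0$ ever enters the sum; with that justification replaced, the argument is complete.
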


\begin{proof} (B)$\Rightarrow$(A) 
Let $\lambda_1,\ldots,\lambda_n$ be functions on $[0,1]$ defined as in
\eqref{eqlaP} for some $v\in\Rn$. Then, by Proposition \ref{lemma-0} we have $v
= (\lambda_1(0),\ldots,\lambda_n(0))$. Moreover, by Theorem
\ref{propderivpolinomi} we have, a.e.~on $[0,1]$ and for any $i=1,\ldots,n$,
\begin{equation}\label{stax}
    \dot\lambda_i = \frac{d}{dt} P^v_i(\gamma) 
  = \sum_{j=1}^r\dot\gamma_j X_j P^v_i(\gamma)
  = \sum_{j=1}^r \sum_{k=1}^n c_{ji}^k \dot \gamma_j  
      P^v_k(\gamma)
 =  - \sum_{j=1}^r \sum_{k=1}^n c_{ij}^k \dot \gamma_j  
      \lambda_k.
\end{equation}

(A)$\Rightarrow$(B) Let $v= (\lambda_1(0),\ldots,\lambda_n(0))\in\Rn$. Then the
functions $\lambda_i$ and the functions $\mu_i = P^v_i(\gamma)$,
$i=1,\ldots,n$, 
solve the system of differential equation \eqref{81} 
with the same initial condition. By the uniqueness of the solution we have
$\lambda_i = \mu_i$.
\end{proof}

Next, we define the notion of corank for an abnormal extremal.
For any
$h\in L^2([0,1];\R^r)$, let $\ga^h$ be the solution of the problem
\[
 \dot\ga^h = \sum_{j=1}^r h_j X_j(\ga^h),\quad \ga^h(0)=x_0.
\]
The mapping $\mathcal E: L^2([0,1];\R^r)\to  G=\Rn$, $\mathcal E (h) =
\ga^h(1)$, is called 
the \emph{end-point mapping}  with initial point $x_0\in G$.
A horizontal curve $\gamma$ starting from
$x_0$ with controls $h$ is an abnormal extremal if and only if
there exists $v\in\R^{n}$, $v\neq 0$,  
such that 
\begin{equation}\label{POX9}
    \langle d\mathcal E(h)w,v\rangle =0
\end{equation}
for all $w\in L^2([0,1];\R^r)$. Here, $d\mathcal E(h)$ is the differential of
$\mathcal E$ at the point $h$.

\begin{defi}\label{CORANK}
 The \emph{corank} of an abnormal extremal $\ga:[0,1]\to\Rn$ with controls $h$
is the
integer $\mathrm{corank}(\gamma)=n-\mathrm{dim}\big(
\mathrm{Im} \, d \mathcal{E}(h)\big) \geq 1$.
\end{defi}

 \noindent  The  {corank} of an abnormal extremal $\ga:[0,1]\to G$  
is the
integer $m\geq 1$ such that there exist precisely $m$
linearly independent dual curves
$\lambda^1,\ldots,\lambda^m\in\mathrm{Lip}([0,1];\mathfrak {g}^*)$ each  solving
the system
of adjoint equations \eqref{81}   with initial conditions
$\lambda^1(0),\ldots,\lambda^m(0)\in \Rn$ that are orthogonal to the image
of
the differential
of the end-point mapping.

\begin{defi} \label{AV} Let $G=\R^n$ be a stratified nilpotent Lie group
of rank $r$.
 For any $v\in\Rn$, $v\neq0$, we call the set   
\[
  Z_v = \{x\in  \Rn: P_j^v (x)= 0\textrm{ for all $j\in\Z$ with $j\leq r$} \}
\]
an \emph{abnormal variety} of  $G$ \emph{of corank $1$}.

For linearly
independent vectors $v_1,\ldots, v_m\in\Rn$, $m\geq 2$, we call the set
$Z_{v_1}\cap \ldots \cap Z_{v_m}$ an  \emph{abnormal variety} of  $G$
\emph{of corank $m$}.
\end{defi}

\begin{remark}
The abnormal variety $Z_v$ is the intersection 
of the zero sets of all the polynomials $P^v_j$ with $j\leq r$. In this
intersection, also indexes $j\leq 0$ are involved. With this respect, Definition
\ref{AV} differs from the analogous definition in \cite{LLMV}.
\end{remark}

\begin{teo}\label{cor:polin} 
Let $G=\Rn$ be a stratified nilpotent Lie group and let $\ga:[0,1]\to G$
be a  horizontal curve with
$\ga(0)=0$. The following statements are
equivalent:

\begin{itemize}
 \item[(A)] The curve $\ga$ is an abnormal extremal with
$\mathrm{corank}(\gamma)\geq m \geq 1$.

 \item[(B)] There exist  $m\geq 1$ linearly independent vectors
$v_1,\ldots,v_m\in\Rn$ such that $\gamma(t)\in  Z_{v_1}\cap\ldots\cap
Z_{v_m}$ for all $t\in[0,1]$.
\end{itemize}
\end{teo}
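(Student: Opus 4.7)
\emph{Proof proposal.} The plan is to reduce both implications to Theorem~\ref{teointduale}, which identifies a dual curve by $\lambda_i(t)=P_i^v(\gamma(t))$ for $i=1,\dots,n$, combined with the vanishing $\lambda_1=\cdots=\lambda_r=0$ that characterizes abnormality, cf.~\eqref{condnecessabnormali}, and to Proposition~\ref{lemma-0}, which pins down the value $P_j^v(0)$.

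For (A)$\Rightarrow$(B), if $\gamma$ has corank at least $m$ then there are $m$ linearly independent dual curves $\lambda^1,\dots,\lambda^m$, each with $\lambda_0=0$. Setting $v_k=(\lambda^k_1(0),\dots,\lambda^k_n(0))\in\Rn$, the linear independence of the $\lambda^k$'s (which solve a linear ODE with prescribed initial data) transfers to that of the vectors $v_k$. Theorem~\ref{teointduale} gives $\lambda^k_i(t)=P_i^{v_k}(\gamma(t))$ for $i=1,\dots,n$, and \eqref{condnecessabnormali} yields $P_i^{v_k}(\gamma(t))=0$ for $i=1,\dots,r$. What is left is to promote this vanishing to every prolongation index $j\leq 0$, in order to conclude $\gamma(t)\in Z_{v_k}$.

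This extension is the main point, since $\lambda^k$ has no components in prolongation degrees while $Z_{v_k}$ involves all $j\leq r$. I would argue by induction on the depth $1-d(j)\geq 0$, the base case $d(j)=1$ being the above. For the inductive step with $d(j)\leq 0$, apply the structure formula \eqref{derivatepolinomi} to each $i=1,\dots,r$:
\[
X_i P_j^{v_k}=\sum_{\ell\leq n}c_{ij}^\ell\,P_\ell^{v_k}.
\]
The grading \eqref{GRADO} forces $c_{ij}^\ell\neq 0$ only when $d(\ell)=d(j)+1$, a strictly smaller depth, so the inductive hypothesis gives $P_\ell^{v_k}(\gamma(t))\equiv 0$. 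Hence $(X_iP_j^{v_k})(\gamma(t))\equiv 0$ for $i=1,\dots,r$, and the horizontality of $\gamma$ yields $\tfrac{d}{dt}P_j^{v_k}(\gamma(t))\equiv 0$. Combined with $P_j^{v_k}(0)=0$ from Proposition~\ref{lemma-0}(ii) and $\gamma(0)=0$, this gives $P_j^{v_k}(\gamma(t))\equiv 0$, completing the induction.

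For (B)$\Rightarrow$(A), define $\lambda^k_i(t)=P_i^{v_k}(\gamma(t))$ for $k=1,\dots,m$ and $i=1,\dots,n$. The implication (B)$\Rightarrow$(A) of Theorem~\ref{teointduale} shows that each $\lambda^k$ solves the adjoint equations~\eqref{81}; the hypothesis $\gamma([0,1])\subset Z_{v_k}$ forces $\lambda^k_j\equiv 0$ for $j=1,\dots,r$, so Pontryagin's condition~(ii) of Theorem~\ref{teo:eqdualvar1} holds with $\lambda_0=0$. Condition~(i) follows from Proposition~\ref{lemma-0}(i): $\lambda^k(0)=v_k\neq 0$, and linearity of the adjoint equations propagates nonvanishing to the whole interval. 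Hence $\gamma$ is an abnormal extremal with dual curves $\lambda^1,\dots,\lambda^m$, whose linear independence is inherited from that of $v_1,\dots,v_m$ via uniqueness for the Cauchy problem \eqref{81}, so $\mathrm{corank}(\gamma)\geq m$.
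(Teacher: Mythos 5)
Your proposal is correct and follows essentially the same route as the paper: both directions reduce to Theorem~\ref{teointduale} together with the vanishing $\lambda_1=\cdots=\lambda_r=0$, the extension to the prolongation indices $j\leq 0$ is done by the same induction on degree using the structure formulas \eqref{derivatepolinomi}, the grading, Proposition~\ref{lemma-0}(ii) and $\gamma(0)=0$, and the corank count via linearly independent dual curves matches the paper's argument (your remarks on transferring linear independence through the Cauchy problem only make explicit what the paper leaves implicit).
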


\begin{proof} 

(A)$\Rightarrow$(B)  Let  $\ga$ be an abnormal extremal and let 
$\la = \la_1\vartheta_1+\ldots + \la_n\vartheta_n$  be a Lipschitz curve of
$1$-forms with coordinates $\la_1,\ldots,\la_n$ solving the system of
differential equations \eqref{81} and such
that $\la\neq0$ pointwise. By Theorem \ref{teointduale}, there is a $v\in\Rn$,
$v\neq0$, such that $\la_i = P_i^v(\ga)$ for all $i=1,\ldots,n$. In
particular,   there holds $(\lambda_1(0),\ldots,\lambda_n(0)) = v$.
For abnormal
extremals, we have $\la_1=\ldots=\la_r=0$ on $[0,1]$, i.e., 
$\gamma(t)\in \big\{ x\in\Rn : P_j^v(x) = 0 \textrm { for all
$j=1,\ldots,r$}\big\} $
for all $t\in[0,1]$.  

We claim that $P_j^v(\gamma)=0$ also for all $j\in\Z$ with $j\leq 0$. The proof
is by induction on $d(j)$. The base of induction for $d(j)=1$ is proved above.
By induction, assume that $P_k^v(\gamma)=0$ for all $k\in\Z$ such that
$d(i)<d(k)\leq 1$. 
As in \eqref{stax}, by Theorem \ref{propderivpolinomi} we have
\[
 \frac{d}{dt} P^v_i(\gamma) 
  = -\sum_{j=1}^r \sum_{k\leq n} c_{ij}^k \dot \gamma_j  
      P^v_k(\gamma)=0,
\]
because $c_{ij}^k=0$ for $d(k)\leq d(i)$, for any $j=1,\ldots,r$. It follows
that $P_i^v(\gamma)=P_i^v(\gamma(0))$ is constant. By Proposition
\ref{lemma-0} and $\gamma(0)=0$, it is the zero constant. This shows that
$\gamma(t) \in Z_v$ for all $t\in[0,1]$.

From the above argument, we conclude  that if
$\mathrm{corank}(\gamma)\geq m$ then there exist 
at least $m$ linearly independent vectors $v_1,\ldots,v_m\in\Rn$ su that 
$\gamma(t) \in Z_{v_i}$ for all $t\in[0,1]$ and $i=1,\ldots,m$.

\medskip
(B)$\Rightarrow$(A) Let $v_1,\ldots,v_m\in\Rn$ be linearly independent vectors
such that (B) holds.
For any $v=v_1,\ldots,v_m$,  the curve $\la=\la_1\vartheta_1+ \ldots +
\la_n\vartheta_n$ with coordinates $\lambda_i = P^v_i(\gamma)$ satisfies the
system \eqref{81}, by Theorem
\ref{teointduale}. Moreover, we have $\la_1=\ldots=\la_r=0$ because $\gamma$ is
an abnormal extremal. From $v\neq 0$, it
follows that $(\la_1(0),\ldots,\la_n(0)) =v \neq 0$, and from the uniqueness of
the solution to
\eqref{81} with initial condition, it follows that $\la\neq0$ pointwise on
$[0,1]$. As $v_1,\ldots,v_m$ are linearly independent, then the corresponding
curves $\la$ are also linearly independent. We conclude
$\mathrm{corank}(\gamma)\geq m$.
\end{proof}

\begin{remark}\label{R2}
In the rank 2 case, an abnormal extremal $\gamma$   satisfies
$\lambda_1 = P_1^v(\gamma)=0$ and $\lambda_2 = P_2^v (\gamma)=0$. When $\gamma$
is not a constant curve, these equations imply that also $\lambda_3
=0$. Indeed,  the
equations \eqref{81} for $\lambda_{1}$ and $\lambda_{2}$ are 
\[
\dot \lambda_{1} = \dot \gamma_{2}\lambda_{3}\quad \textrm{and}\quad 
\dot \lambda_{2} = -\dot \gamma_{1}\lambda_{3}.
\]
Thus, if $\dot\gamma_1^2+\dot\gamma_2^2\neq0$, it must be $\lambda_3=0$.

\end{remark}

\section{The abnormal set in the free 
group of rank 2 and step 4}\label{esempi}

In this section, we present a method to compute a set containing
all abnormal extremals passing through one point. The method works when
each abnormal extremal is in the zero sets of a sufficiently large number
of extremal polynomials. This is the case when the prolongation is sufficiently
large. In order to make the presentation clear, we focus   on the free group
of rank 2 and step 4. The key point is to show that a certain polynomial is
nontrivial, see Theorem \ref{5.2} below.

Let $\mathfrak g = \mathfrak
g_1\oplus\cdots\oplus \mathfrak g_4$ be the free nilpotent Lie
algebra with rank
 $r=2$ and step $s=4$. This algebra is $8$-dimensional.  
Let $X_1,\ldots,X_8$ be a Hall basis of $\mathfrak g$ (see \cite{H} and
\cite{LLMV}). This basis satisfies the following structure relations
\begin{equation}
\label{pill}
\begin{split}
   X_3 & = [X_2,X_1],\quad
   X_4 = [X_3,X_1],\quad
   X_5 = [X_3,X_2],
\\
   X_6 & = [X_4,X_1],\quad
   X_7 = [X_4,X_2],\quad
   X_8 = [X_5,X_2].
\end{split}
\end{equation}
In the Grayson-Grossman model \cite{GrGr}, $X_1,\ldots,X_8$ can be identified with vector
fields in $\R^8$ where
\begin{equation}
 \label{CAMP}
\begin{split}
   X_1 & = \frac{\partial}{\partial x_1},\quad
\\
   X_2 & = \frac{\partial}{\partial x_2}
   -x_1\frac{\partial}{\partial x_3}
   +\frac {x_1^2}{2} \frac{\partial}{\partial x_4}
   +x_1x_2 \frac{\partial}{\partial x_5}
   -\frac{x_1^3}{6} \frac{\partial}{\partial x_6}
   -\frac{x_1^2 x_2}{2} \frac{\partial}{\partial x_7}
   -\frac{x_1 x_2^2}{2} \frac{\partial}{\partial x_8}.
\end{split}
\end{equation}

The Tanaka prolongation
$\mathrm{Prol}(\mathfrak g)$ is finite dimensional and by Theorem 1 in
\cite{BEN}, we have 
\[
 \mathrm{Prol}(\mathfrak g) = \mathfrak g_0 \oplus \mathfrak g, 
\]
where $\mathfrak g_0$ is the vector space of strata preserving derivations of
$\mathfrak g$. In fact, $\mathfrak g_0$ is isomorphic to $\Endo(\mathfrak g_1)$
and thus $\mathfrak g_0$ is $4$-dimensional, because $\mathrm{dim}(\mathfrak
g_1)=2$.
We extend the basis for $\mathfrak g$ to  a basis $X_{-3}, X_{-2},\ldots, X_8$
of $\mathrm{Prol}(\mathfrak g)$  and we denote by $c_{j\alpha}^k$, with
$j=-3,-2,\ldots,8$, $ k=1,\ldots,8$, and $\alpha\in\mathcal I =\N^8$, the
generalized
structure constants of $ \mathrm{Prol}(\mathfrak g)$.

Let $G$ be the free   nilpotent Lie group of rank $r=2$ and step $s=4$.
Via exponential coordinates of the second type, $G$ is diffeomorphic to $\R^8$.
The extremal polynomials in $\R^8$ associated with the Tanaka prolongation of
$\mathfrak g = \mathrm{Lie}(G)$ are, for any fixed $v\in\R^8$,
\begin{equation}\label{polli}
\begin{split}
  P_j^v (x) & 
 = \sum_{\alpha\in \mathcal I} \frac{(-1)^{|\alpha|}}{\alpha!} \sum_{k=1}^8
c_{j\alpha}^ k
v_ k x^\alpha= \sum_{k=1}^8  v_k   Q_{jk}(x)
,\quad x\in\R^8, 
\end{split}
\end{equation} 
where $Q_{jk}$ are the polynomials in $\R^8$
\begin{equation}\label{pipox}
 Q_{jk}(x)=  \sum_{\alpha\in \mathcal I} 
 \frac{(-1)^{|\alpha|}}{\alpha!}c_{j\alpha}^ k x^\alpha,\quad x\in\R^8.
\end{equation}
Above, we have $j=-3,-2,\ldots, 8$ and $k=1,\ldots,8$.
Notice that  $c_{j\alpha}^k= 0$ if   $d(j) + d(\alpha)\neq  d(k)$.
It follows that $Q_{jk}$ is a polynomial
with homogeneous degree $d(k)-d(j)$.  

Let $v\in\R^8$ be a vector such that $v_1=v_2=v_3=0$.
The first  three polynomials $P_{1}^v$,
$P_{2}^v$, and $P_3^v$  are the following
\begin{equation}
\label{51}
\begin{split}
P_{1}^v(x)  =  & 
v_4 x_3 
- v_5 \frac{x_2^{2}}{2} 
+v_6x_4 
+v_7x_5
+ v_8\frac{x_2^{3}}{6},
\\
P_{2}^v(x)  =   &  
v_4 \frac{x_1^{2}}{2} +(x_3 
+x_1 x_2) v_5 
-\frac{x_1^{3}}{6} v_{6}+\big(x_4
-\frac{x_1^{2}x_2}{2}\big)  v_7 +\big(x_5
-\frac{x_1x_2^{2}}{2}\big) v_8,
\\
P_{3}^v(x)  = & 
- v_4 x_1 - v_5 x_2
+v_6  \frac{x_1^{2}}{2}+ v_7 x_1 x_2 +v_8  \frac{x_2^{2}}{2}.
\end{split}
\end{equation}
These polynomials can be computed using the structure
relations \eqref{pill} and
 formulas \eqref{polli}.

In order to compute the four polynomials $P^v_{-3}$, $P^v_{-2}$, $P^v_{-1}$, and $P^v_{0}$
associated with the stratum $\mathfrak g_0$ of
$\mathrm{Prol}(\mathfrak g)$, 
we have to choose a basis $X_{-3}, X_{-2}, X_{-1}, X_0$
of $\mathfrak g_0$.
We can identify $\mathfrak g_{0}$ with $\Endo(\mathfrak g_{1})$, and
$\mathfrak g_1$ with $\R^2$ via the basis $X_1,X_2$.
Hence, we can make the
following choice:
\[
   X_{-3} = \left(
\begin{array}{cc}
 0 &  1
\\
  0 &  0 
\end{array}
\right),\quad
   X_{-2} = \left(
\begin{array}{cc}
 0 &  0
\\
  0 &  1
\end{array}
\right),\quad
   X_{-1} = \left(
\begin{array}{cc}
 1&  0
\\
  0 &  0
\end{array}
\right),\quad
   X_{0} = \left(
\begin{array}{cc}
 0 &  0
\\
  1 &  0
\end{array}
\right).
\]
The commutator $[X_j,X_i]$, with $j=-3,\ldots,0$ and $i=1,2$,
is identified with $X_j(X_i)$ where $X_j$ is a linear operator
on $\mathfrak g_1$.  
The polynomials associated with this basis are the following
\begin{equation}\label{537}
 \begin{split}
  P_{-3}^v (x)  & = v_4 (x_3 x_2 +x_5)
- v_5 \frac{x_2^{3}}{6} 
+v_6(x_4 x_2 +x_7)
+v_7(x_5 x_2 +2x_8)
+ v_8\frac{x_2^{4}}{24} ,
\\
 P_{-2}^v(x) &  
= v_4 x_4 + v_5(x_2 x_3 + 2 x_5) + v_6 x_6+v_7( x_2 x_4 + 2 x_7)
 +v_8( x_2
x_5 + 3 x_8),
\\
P_{-1}^v(x)  &= v_4(x_1 x_3 + 2 x_4) +v_5\big(x_5 -\frac{x_1 x_2^2}
{2}  \big) +v_6( x_1 x_4 + 3 x_6) 
\\
&\qquad +v_7( x_1 x_5 + 2 x_7) 
+v_8\big(\frac{x_1 x_2^3}{6}
 + x_8\big),
\\
 P_{0}^v (x)  &= 
v_4\frac{x_1^3}{6}
+v_5 \big(\frac{x_1^2 x_2}{2} + x_1 x_3 + x_4\big)- v_6
\frac{x_1^4}{24}
\\
&\qquad
 +v_7\big(-\frac{x_1^3 x_2}{6} + x_1 x_4 + 2 x_6\big)
 +v_8\big( -\frac{x_1^2 x_2^2}{4} + x_1 x_5 + x_7\big).
 \end{split}
\end{equation}
These formulas can be checked in the following way.
According to \eqref{derivatepolinomi}, the polynomials \eqref{537}
have to satisfy
the   structure identities
\begin{equation}\label{ID}
\begin{split}
  X_1 P_{-3}^v &= 0,\quad
  X_1 P_{-2}^v = 0,\quad
  X_1 P_{-1}^v = P_1^v,\quad
  X_1 P_{0}^v = P_2^v,
\\
  X_2 P_{-3}^v &= P_1^v,\quad
  X_2 P_{-2}^v = P_2^v,\quad
  X_2 P_{-1}^v = 0,\quad
  X_2 P_{0}^v = 0.
\end{split}
\end{equation}
Notice that we have the structure constants
$c_{-3,1}^k = c_{-2,1}^k = c_{-1,2}^k = c_{0,2}^k=0$, 
$c_{-3,2}^k= c_{-1,1}^k =\delta_{1k}$, 
$c_{-2,2}^k= c_{0,1}^k = \delta _{2k}$, $k=1,2$.
Along with the condition $P_j^v(0)=0$, $j=-3,\ldots,0$, the  identities
\eqref{ID} uniquely determine the polynomials. 
Now, by a direct computation based on \eqref{CAMP}, it can be checked that the
polynomials \eqref{537} do satisfy \eqref{ID}.

The polynomials \eqref{537} can also be computed  
using the following algorithm. Let $\gamma:[0,1]\to\R^8$
be a horizontal curve such that $\gamma(0)=0$ and, for $i,j=1,2$, let
\[
 B_{ij}^v (t) = \int_0^t P_i^v(\gamma)\dot\gamma_j\, ds,\quad t\in[0,1].
\]
Using the formulas \eqref{51} for $P_1^v$ and $P_2^v$,
the integrals in the right hand side can be computed.
Using the explicit formulas for the coefficients of the vector field $X_2$ in
\eqref{CAMP}, the resulting
functions can be shown to be  polynomials of the coordinates of $\gamma(t)$. Namely, along
$\gamma$ we have
$B_{12}^v =P_{-3}^v$, $B_{22}^v=P_{-2}^v$, $B_{11}^v = P^v_{-1}$, and $
B_{21}^v = P_0^v$.

\medskip 
 
Let $\gamma:[0,1]\to G=\R^8$ be an abnormal extremal such that $\gamma(0)=0$.
By Theorem \ref{cor:polin} (see also Remark \ref{R2}) there
exists $v\in\R^8$, with $v\neq
0$, such that,  for all $j=-3,-2,\ldots,3$, we have
\begin{equation} \label{piox2}
    P_j^v(\gamma(t)) = 0,\quad t\in[0,1].
\end{equation}
In particular, $v=(v_1,\ldots,v_8)$ satisfies $v_1=v_2=v_3 =0$ (because
$\gamma(0)=0$).

For each $j=-3,\ldots, 3$, we define the following $5$-dimensional vector  of
polynomials
\[
 Q_j(x) = \big( Q_{j4}(x),\ldots , Q_{j8}(x)\big),
\]
where $Q_{jk}$ are defined in \eqref{pipox}. By \eqref{polli}, identity
\eqref{piox2}
reads
\[
  \sum_{k=4}^ 8 v_ k Q_{jk}(\gamma(t)) = 0,\quad t\in[0,1].
\]
In other words, along the curve $\gamma$ the seven $5$-dimensional vectors
$Q_{-3}(x),\ldots,Q_3(x)$ are orthogonal to the nonzero vector
$(v_4,\ldots,v_8)\in\R^5$. It follows that the $7\times5$ matrix
\[
 Q(x) = \left(
\begin{array}{ccc}
        Q_{-3,4}(x)&\ldots&  Q_{-3,8}(x)
\\
 \vdots &&\vdots
\\
        Q_{3,4}(x)&\ldots&  Q_{3,8}(x)
       \end{array}
 \right)
\]
 has rank at most $4$ along the curve $\gamma$.

Let $M_1(x),\ldots,M_{21}(x)$ be the $21$ different $5\times5$ minors of
$Q(x)$ and define the determinant functions 
\begin{equation}\label{centrox}
f_k(x) = \det (M_k(x)),\quad k=1,\ldots,21,\quad x\in \R^8. \end{equation}
It can be proved that the functions $f_1,\ldots, f_{21}$ are homogeneous
polynomials. By the discussion above, we
have
\[
    f_k(\gamma(t)) = 0 \quad \textrm{for all $t\in[0,1]$ and for all
$k=1,\ldots,21$}.
\]
The polynomials $f_1,\ldots, f_{21}$ do not depend on $v$.

\begin{teo} \label{5.2}
Let $G=\R^8$ be the free nilpotent Lie group of rank $2$ and step $4$. 
There exists a nonzero  homogeneous polynomial 
$Q:\R^8\to\R$ such that any abnormal curve $\gamma:[0,1]\to\R^8=G$ with
$\gamma(0)=0$ satisfies $Q(\gamma(t))=0$ for all $t\in[0,1]$. 
\end{teo}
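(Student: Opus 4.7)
The construction preceding the theorem already does most of the work: it exhibits $21$ explicit polynomials $f_1,\ldots,f_{21}$ on $\R^8$, namely the $5\times 5$ minor determinants of the $7\times 5$ matrix $Q(x)$ with rows $Q_{j,\cdot}$, $j=-3,-2,\ldots,3$, and columns indexed by $k=4,\ldots,8$. By the rank argument discussed just before the statement, every such $f_k$ vanishes along any abnormal curve $\gamma$ with $\gamma(0)=0$. Moreover, each $f_k$ is homogeneous: since $Q_{jk}$ is homogeneous of degree $d(k)-d(j)$ and a $5\times 5$ minor uses all five columns $k=4,\ldots,8$, the Leibniz expansion shows that $f_k$ is homogeneous of degree $18-\sum_{j}d(j)$, where the sum ranges over the five rows selected (and the constant $18=d(4)+d(5)+d(6)+d(7)+d(8)=3+3+4+4+4$ is column-independent). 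Consequently, the theorem reduces to the single assertion that at least one of the polynomials $f_1,\ldots,f_{21}$ is not identically zero; any such $f_k$ may then be taken as $Q$.

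The plan is therefore to exhibit a point $x^\ast\in\R^8$ at which the numerical $7\times 5$ matrix $Q(x^\ast)$ has rank $5$, which immediately forces some $5\times 5$ minor to be nonzero at $x^\ast$ and hence nonzero as a polynomial. Reading off the entries $Q_{jk}$ from the explicit formulas in \eqref{51} and \eqref{537} is a routine substitution. I would then choose a test point tailored so that many entries simplify. For instance, a point of the form $x^\ast = (0,1,x_3,x_4,\ldots,x_8)$ makes the row $j=3$ easy to use as a pivot; successively feeding in generic values for the higher coordinates and performing row reduction on $Q(x^\ast)$ should produce a rank-$5$ block, at which stage one reads off a nonvanishing minor.

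The main obstacle is conceptual rather than computational: one needs some reassurance that the algebraic structure of the Tanaka prolongation does not force an unexpected degeneracy of $Q(x)$ making all $21$ minors vanish identically. In other words, the true content of the theorem is that the $7\times 5$ matrix $Q$ has \emph{generic} rank equal to $5$, so that the abnormal set is carved out by at least one nontrivial algebraic equation; no universal cancellation occurs despite the many structure-constant identities at play. Once a concrete rank-$5$ evaluation is produced (by hand on a well-chosen point or by a short symbolic computation), the homogeneous polynomial $Q$ claimed in the theorem is exhibited explicitly as one of the determinants $f_k$, and the proof is complete.
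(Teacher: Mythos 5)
Your reduction is the same as the paper's: the theorem follows once at least one of the $21$ minors $f_1,\dots,f_{21}$ of the $7\times 5$ matrix $\big(Q_{jk}\big)_{j=-3,\dots,3;\,k=4,\dots,8}$ is shown to be a nonzero polynomial, and your homogeneity count (each minor has weighted degree $18-\sum_j d(j)$ over the five chosen rows, consistent with the paper's degree $14$ for the rows $j=-1,0,1,2,3$) is correct. The problem is that you stop exactly where the theorem begins. The entire nontrivial content of the statement is the non-degeneracy you yourself flag as ``the main obstacle'': that the structure-constant identities do not force all $21$ minors to vanish identically. Your proposal to ``choose a test point $x^\ast=(0,1,x_3,\dots,x_8)$, feed in generic values and row-reduce, which should produce a rank-$5$ block'' is a plan for a computation, not a proof; no point is exhibited, no evaluation is carried out, and nothing in the argument rules out the feared universal cancellation. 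As written, the proposal therefore has a genuine gap at its only essential step.

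For comparison, the paper closes this gap without any numerical evaluation, by a monomial-tracking argument on the single minor of least degree, namely the $5\times 5$ matrix $M(x)$ with rows $j=-1,0,1,2,3$. Inspecting the explicit formulas \eqref{51} and \eqref{537} one sees that $x_8$ occurs only in the entry $M_{55}$, that among the entries with $i,j\le 4$ the variable $x_6$ occurs only in $M_{44}$, that among those with $i,j\le 3$ the variable $x_4$ occurs only in $M_{33}$, that among those with $i,j\le 2$ the variable $x_3$ occurs only in $M_{22}$, and that $M_{11}=-x_1$. Hence the Leibniz expansion of $\det M$ contains the monomial $-2\,x_1x_3x_4x_6x_8$, which no other term can produce, so $\det M\not\equiv 0$. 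If you want to keep your evaluation-at-a-point strategy, you must actually exhibit the point and verify the rank (or a nonzero minor) explicitly; otherwise adopt an argument of the above type that isolates an uncancellable monomial.
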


\begin{proof}
It is enough to show that at least one of the polynomials 
$f_1,\ldots,f_{21}$ in \eqref{centrox} is nonzero.  Let us consider the
polynomial with least degree 
\[
   Q(x) = \det M(x) = \det \left(
\begin{array}{ccc}
        Q_{-1,4}(x)&\ldots&  Q_{-1,8}(x)
\\        Q_{0,4}(x)&\ldots&  Q_{0,8}(x)
\\

 \vdots &&\vdots
\\
        Q_{3,4}(x)&\ldots&  Q_{3,8}(x)
       \end{array}
 \right).
\]
The homogeneous degree of $Q$ is $14$. 
We highlight the anti-diagonal of the matrix $M$:
\[
    M(x)  =\left(
\begin{array}{ccccc}
       *&*&*&*& x_1 x_2^3/6+x_8
\\ 
    *&*&*&-x_1^3 x_2/6 +x_1x_4 +2 x_6 &*
\\
*&*&x_4&*&*
\\
* & x_1 x_2+x_3 & *&*&*
\\
-x_1&*&*&*&*
\\
       \end{array}
 \right).
\]

Upon inspection of the polynomials $P_{-1}^v,\ldots,P_3^v$ in \eqref{51} and
\eqref{537}, we observe the following facts.
The variable $x_8$ appears in the  entry $M_{ij}$ 
only when $i=j=5$. When $i,j\leq 4$, the variable $x_6$
appears only in the entry $M_{ij}$    with  $i=j=4$.
When $i,j\leq 3$, the variable $x_4$
appears   only in the entry $M_{ij}$ with $i=j=3$.
When $i,j\leq 2$, the variable $x_3$
appears only in the entry $M_{ij}$ with $i,j=2$. Finally, we 
  have $M_{11}= -x_1$. It follows that
\[
  Q(x) = -2x_1x_3x_4 x_6x_8 +R(x),
\]
where $R(x)$ is a polynomial that does not contain the
monomial $x_1x_3x_4 x_6x_8 $. This proves that $Q\neq 0$.

\end{proof}

\begin{remark}
Any abnormal curve passing through $0$ is in the intersection of the
zero sets of the $21$ polynomials \eqref{centrox} in $\R^8$. 
Even though all these polynomials are explicitly computable, the precise structure of this intersection is
not clear. 

For any $v\in\R^8$ with $v\neq 0$ and $v_1=v_2=v_3$,
the equation $P_3^v(x)=0$ determines two (in some exceptional cases four)
different abnormal extremals $\gamma^v$   parameterized by arc-length and such
that $\gamma^v(0)=0$. The mapping $v\mapsto
\gamma^v(1)$ seems   to parametrize a
subset of $\R^8$ that is  at most $5$-dimensional. The abnormal set of $G=\R^8$
is then presumably $6$-dimensional, while in Theorem \ref{5.2} abnormal
extremals are shown to be in a set with dimension less than or equal to $7$. 
\end{remark}

\section{Spiral-like Goh extremals} 
\label{Goh}

The shortening technique introduced in \cite{LM} has
two steps:  a curve with a corner at a given point is blown up; the
limit curve obtained in this way
is shown not to be length minimizing. This provides an
``almost-$C^1$''
regularity result for sub-Riemannian length minimizing curves.
The technique, however, fails when, in the
blow up at the singular point,
the   curve has no proper angle.
Here,  we show that there do exist Goh extremals of this kind.
The example is a generalization of \cite[Example 6.4]{LLMV}.

Let $G$ be an $n$-dimensional stratified nilpotent Lie group with Lie algebra
$\mathfrak g = \mathfrak g_1\oplus\cdots\oplus \mathfrak g_s$, 
where $s\geq 3$ is step of
the group. 
An abnormal
extremal $\gamma:[0,1]\to G$ with dual curve $\lambda = \lambda_1\vartheta_1
+\cdots+\lambda_n\vartheta _n$ is said to be a Goh extremal if $\la_i=0$ for all
$i=1,\ldots,n$ such that 
$d(i)=1$ or $d(i)=2$.
By Theorem \ref{teointduale},  
a horizontal curve $\gamma:[0,1]\to G$ with $\gamma(0)=0$ is a  Goh extremal
precisely when there exists $v\in\Rn$, $v\neq0$,
such that
\begin{equation} \label{GOE}
 P_i^v(\gamma(t)) = 0 ,\quad \textrm{for all $t\in[0,1]$ and
$d(i)\in\{1,2\}$.}
\end{equation}
By Theorem \ref{propderivpolinomi}, the condition $P_i^v(\gamma) = 0 $ for  
$d(i)=2$ is equivalent to \eqref{GOE}.

Let $F$ be the  free nilpotent Lie group of rank 3 and step 4 and consider
the direct product $G = F\times F$. 
As $F$ is   diffeomorphic to $\R^{32}$, then  $G$ is a
stratified Lie group of rank 6 and step 4 diffeomorphic to $\R^{64}$.
We fix a basis $X_1,\ldots,X_{64}$  of $\mathrm{Lie}(G)$ adapted
to the stratification and we identify $G$ with $\R^{64}$ via exponential
coordinates of the second type.
We reorder and relabel the basis as $Y_{1},\dots,Y_{32},Z_{1},\dots,Z_{32}$,
where $Y_1,\ldots, Y_{32}$ is an adapted basis of a first copy of $\mathrm{Lie}
(F)$ and
$Z_1,\ldots, Z_{32}$ is an adapted basis of a second copy of $\mathrm{Lie} (F)$.
We denote the  corresponding coordinates on $G$ by $(y,z)\in\R^{64}$
with $y,z\in \R^{32}$.

Notice that  we have $[Y_{i},Z_{j}]=0$ 
for all $i,j=1,\dots,32$.
Then any abnormal polynomial $P_i^v(x)$ 
of $G$, $i=1,\ldots,64$
and $v\in\R^{64}$,  splits as
\[
 P_i^v(x) = Q_j^{v_y}(y)+Q_k^{v_z}(z), \quad x = (y,z),
\]
for some $j,k=1,\ldots,32$ and $v_y,v_z\in\R^{32}$, where
$Q_1^v ,\ldots,Q_{32}^v $, $v\in\R^{32}$, are the extremal polynomials of $F$.
In particular, for $i=4,5,6$ we have the polynomials: 
\[
\begin{split}
 Q_{4}^v(y)  & =  v_4  -y_1 v_7-y_2 v_8 
-y_3 v_9 + \frac{y_1^{2}}{2} v_{15}
+y_1 y_2 v_{16} 
+y_1 y_3 v_{17} 
+ \frac{y_2^{2}}{2} v_{18}
\\
&\qquad +y_2 y_3 v_{19}
+ \frac{y_3^{2}}{2} v_{20}+y_5v_{30}
+y_6v_{31}
\\
Q_{5}^v(y)  & = v_5 -y_1 v_{10}-y_2 v_{11} 
-y_3 v_{12} 
+ \frac{y_1^{2}}{2} v_{21}
+y_1 y_2 v_{22} 
+y_1 y_3 v_{23} 
+ \frac{y_2^{2}}{2} v_{24}
\\
&\qquad +y_2 y_3 v_{25}
+ \frac{y_3^{2}}{2} v_{26}
-y_4v_{30}
+y_6v_{32}
\\
Q_{6}^v(y)  & = v_6+ y_1 (v_9
- v_{11})
-y_2 v_{13} 
-y_3 v_{14} 
 - \frac{y_1^{2}}{2} v_{17}-y_1 y_2 v_{19} 
-y_1 y_3 v_{20} 
+\frac{y_1^{2}}{2} v_{22}  
\\
&\qquad
+  y_1 y_2 v_{24} 
+ y_1 y_3 v_{25}
+ \frac{y_3^{2}}{2} v_{29}
+y_1^2 v_{30}
+ (y_1 y_2-y_4)v_{31}  -y_5v_{32}
.
\end{split}
\] 
We omit the  deduction of these formulas.

Let $v,w\in\R^{32}$ be such that $v_i=w_i=0$ for all $i=1,\ldots,6$ and 
$|v|+|w|\neq0$. 
By Theorem \ref{teointduale}, any horizontal curve $\gamma =
(\gamma_y,\gamma_z):[0,1]\to G$ such that $\gamma(0)=0$ and  with support contained in the algebraic set  
\[
 \Sigma_{v,w} = \big  \{ (y,z)\in\R^{64}: \ Q_i^v(y) =Q_{i}^{w}(z)=0 \text{
for }i=4,5,6 \big\}
\]
is a Goh extremal. Notice that, by \eqref{derivatepolinomi}, $Q_i^v(\gamma_y)
=Q_{i}^{w}(\gamma_z)=0$ for $i=4,5,6$ implies
 $Q_i^v(\gamma_y)
=Q_{i}^{w}(\gamma_z)=0$ also for $i=1,2,3$. 
We choose $v\in\R^{32} $ such that  $v_7 = 1$,
$v_{18}= 2$, and $v_j =0$ otherwise. Then we have 
\[
Q^v_4(y) =  y_2^2-y_1,\quad Q_5^v(y)=Q_6^v(y)=0,\quad y=(y_1,\ldots,y_{32}).
\]
For any pair of Lipschitz functions
$\phi,\psi:[-1,1]\to\R$ with $\phi(0)=\psi(0)=0$, the
horizontal curve
$\gamma=(\gamma_y,\gamma_z):[-1,1]\to\R^{64}$ defined by
$\gamma_y(t) =(t^2,t,\phi(t),*,\ldots,*)$ and
$\gamma_z(t) = (t^2,t,\psi(t),*,\ldots,*)$
has support contained in 
$\Sigma_{v,v}$
and therefore it is a Goh extremal.

In particular, as in \cite{GMT} we can choose
\[
\phi(t) = t\cos(\log(1-\log |t|)),\quad \psi(t) = t\sin(\log(1-\log
|t|)), \quad t\in[-1,1],
\]
where  $\phi(0) = \psi(0) = 0$ in the limit sense. The curve $\gamma$ is  a kind
of  rectifiable spiral. Notice that 
$\max(|\phi'(t)|,|\psi'(t)|) \leq 2$ for all $t\in (0,1)$. Moreover, 
for any $\alpha\in[0,2\pi)$ there exists a positive and infinitesimal sequence
$(\lambda_k)_{k\in \N}$ such that 
\[
  \lim_{k\to\infty} \frac{1}{\lambda _k}  \big( \phi(\lambda_k t),
\psi(\lambda_k t)\big) = t(\cos\alpha, \sin \alpha),\quad t\in\R.
\]
Viceversa, if the limit in the left hand side does exist for some
infinitesimal sequence $(\lambda_k)_{k\in \N}$, then it is of the given form for
some $\alpha\in[0,2\pi)$.

The curve $\gamma$ is not   $C^1$  at $t=0$ and all its  tangents at $t=0$ are lines. 
Even though 
the spiral $\gamma$ is not likely to be length
minimizing near $t=0$, the shortening technique of \cite{LM} cannot be
applied  to prove this.


\end{document}